\documentclass[a4paper, 12pt]{amsart}
\usepackage{amsmath}
\usepackage{amsfonts}
\usepackage{amssymb}
\usepackage[utf8]{inputenc}
\usepackage[T1]{fontenc}
\usepackage{amssymb}
\usepackage{mathrsfs}
\usepackage{amsthm}
\usepackage{xcolor}
\usepackage{float}
\usepackage{tikz}
\usepackage{pgfplots}
\usepgfplotslibrary{fillbetween}

\usepackage{enumerate}

\usepackage{graphicx}

\newtheorem{thrm}{Theorem}[section]
\newtheorem{cor}[thrm]{Corollary}
\newtheorem{lemma}[thrm]{Lemma}
\newtheorem{prop}[thrm]{Proposition}

\newtheorem{claimnumber}{Claim}

\theoremstyle{definition}

\DeclareMathOperator{\cl}{Cl}
\DeclareMathOperator{\Int}{Int}

\DeclareMathOperator{\spa}{span}

\begin{document}

\title[On condensations of $C_p(X)$ spaces onto spaces $(E,w)$]{On condensations of $C_p(X)$ spaces onto Banach spaces with the weak topology}
\author{Miko\l aj Krupski}
\address{Institute of Mathematics\\ University of Warsaw\\ ul. Banacha 2\\
02--097 Warszawa, Poland }
\email{mkrupski@mimuw.edu.pl}
\author{Witold Marciszewski}
\address{Institute of Mathematics\\ University of Warsaw\\ ul. Banacha 2\\
02--097 Warszawa, Poland } \email{wmarcisz@mimuw.edu.pl}
\thanks{The authors were partially supported by the NCN (National Science Centre, Poland) research grant no.\ 2020/37/B/ST1/02613.}

\begin{abstract}
We show that for a broad class of spaces $X$, the space $C_p(X)$ of continuous real-valued functions on $X$ equipped with the pointwise topology maps bijectively onto any separable infinite-dimensional Banach space with the weak topology. We prove that the inverse $\varphi^{-1}$ of any continuous bijection $\varphi: C_p(X)\to (E,w)$, where $X$ is an arbitrary infinite Tychonoff space and $(E,w)$ is an arbitrary normed space with the weak topology, is everywhere discontinuous. This generalizes a recent result of Kania--K\k{a}kol and solves a problem raised by K\k{a}kol, Leiderman and Michalak by providing a complete classification of function spaces $C_p(X)$ that are homeomorphic to Banach spaces with the weak topology.
\end{abstract}

\subjclass[2020]{46B03, 54C35}

\keywords{Space of continuous functions, $C_p(X)$, pointwise topology, weak topology, Banach space, normed space, condensation}

\maketitle

\section{Introduction}

All topological spaces under consideration are assumed to be Tychonoff. For a space $X$ by $C_p(X)$ we denote the space of all real-valued continuous functions on $X$ equipped with the topology of pointwise convergence. The symbol $C^*_p(X)$ stands for the subspace of $C_p(X)$ consisting of all \textit{bounded} continuous functions. If $E$ is a normed space, then by $(E,w)$ we denote $E$ endowed with its weak topology while $(E^\ast, w^\ast)$ denotes its dual equipped with the weak* topology. By a \textit{condensation} we mean a continuous bijection. In this note we prove the following:
\begin{thrm}\label{main}
Let $X$ be an infinite topological space and let $E$ be a normed space. If $\varphi:C_p(X)\to (E,w)$ is a condensation, then its inverse $\varphi^{-1}$ is discontinuous at every point of $E$. An analogous assertion holds if $C_p(X)$ is replaced by $C^*_p(X)$.
\end{thrm}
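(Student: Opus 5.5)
The plan is to argue by contradiction: assume $\varphi^{-1}$ is continuous at some $e\in E$, put $f=\varphi^{-1}(e)$, and play the finite-codimensional affine subspaces inside weak neighbourhoods of $e$ against the norm-boundedness of weakly convergent sequences in $E$. First I will dispose of finite-dimensional $E$. Since $X$ is infinite and Tychonoff, $C_p(X)$ contains a linear subspace homeomorphic to $\mathbb{R}^{n+1}$ for every $n$. If $\dim E=n$, then $(E,w)=\mathbb{R}^n$, and $\varphi$ would restrict to a continuous injection $\mathbb{R}^{n+1}\to\mathbb{R}^n$, which invariance of domain (or Borsuk--Ulam) forbids. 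So $E$ is infinite-dimensional.

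Next I translate continuity at $e$. Fix the basic neighbourhood $V=\{g:|g(x)-f(x)|<1 \text{ for } x\in F\}$ of $f$, with $F\subset X$ finite. Continuity gives functionals $x_1^*,\dots,x_n^*\in E^*$ and $\varepsilon>0$ whose weak neighbourhood $U$ of $e$ satisfies $\varphi^{-1}(U)\subset V$. In particular $U\supset e+Z$, where $Z=\bigcap_i\ker x_i^*$ is a closed subspace of finite codimension $n$, hence infinite-dimensional. Thus $\varphi^{-1}(e+Z)\subset V$, and the set $D=\varphi^{-1}(e+Z)$ is closed in $C_p(X)$. Let $q\colon E\to E/Z\cong\mathbb{R}^n$ be the quotient map.

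On the $C_p(X)$ side I will use the freedom off $F$. Since $X$ is infinite and Tychonoff, choose points $x_k\notin F$ with pairwise disjoint open neighbourhoods $W_k$ missing $F$, and functions $h_k$ supported in $W_k$ with $h_k(x_k)=1$. The key observation is that $f+t_kh_k\to f$ pointwise for every choice of reals $t_k$, because each $x\in X$ lies in at most one $W_k$. Hence $\varphi(f+t_kh_k)\to e$ weakly. Weakly convergent sequences are norm bounded, so a diagonal argument yields $R$ and $k_0$ with $\varphi(f+th_k)\in e+RB_E$ for all $k\ge k_0$ and all $t\in\mathbb{R}$. The same holds for finite combinations $f+\sum_{k\in S}t_kh_k$ with $S\subset[k_0,\infty)$ finite of bounded size, and these functions converge to $f$ as $\min S\to\infty$. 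So large families of finite-dimensional affine pieces $L_S=f+\spa\{h_k:k\in S\}\cong\mathbb{R}^{|S|}$ are mapped by $\varphi$ into a fixed bounded set.

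The contradiction should come from comparing dimensions. Take $|S|=N>n$ and consider the continuous map $q\circ\varphi\colon L_S\to\mathbb{R}^n$. A Borsuk--Ulam / dimension-theoretic argument on large spheres in $L_S$ should produce points $g\in L_S$ with $\varphi(g)\in e+Z$ and with $|g(x_k)|$ arbitrarily large. This does not immediately contradict $g\in V$, since $V$ only constrains values on $F$. So I will first shrink $V$ by adding finitely many of the $x_k$ to $F$, then choose $S$ disjoint from them, and run the fibre argument inside $e+Z$ with $Z$ replaced by its intersection with further kernels. The aim is to show that the closed set $D$ must contain functions violating $V$.

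\textbf{Main obstacle.} I expect the hardest step to be producing points of $L_S$ whose images land exactly in $e+Z$, rather than merely in a bounded set. Since $\varphi$ is not linear, fibres of $q\circ\varphi$ over $q(e)$ need not be large. What I would try first is a degree/Borsuk--Ulam argument applied to $g\mapsto q(\varphi(f+g))-q(e)$ on spheres of $L_S-f$, exploiting that $N>n$ forces the zero set to separate $f$ from infinity. The $C^*_p(X)$ case should be identical, since all the $h_k$ are bounded.
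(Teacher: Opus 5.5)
Your reduction to infinite-dimensional $E$, the inclusion $\varphi^{-1}(e+Z)\subseteq V$, and the uniform norm bound on $\varphi(f+\sum_{k\in S}t_kh_k)$ are all correct. The step that is supposed to yield the contradiction, however, fails for two independent reasons.

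First, nothing forces $\varphi(L_S)$ to meet $e+Z$ away from $e$. Borsuk--Ulam applied to $g\mapsto q(\varphi(f+g))$ on spheres only gives antipodal pairs with $q\varphi(f+h)=q\varphi(f-h)$, not zeros of $q\varphi-q(e)$, and composing with $q$ destroys the injectivity of $\varphi$. Concretely, take $v\notin Z$ and linearly independent $z_k\in Z$. Then $t\mapsto e+\min(\|t\|,1)\,v+\sum_{k\in S}\arctan(t_k)\,z_k$ is a norm-bounded, norm-continuous injection of $L_S\cong\mathbb{R}^N$ sending only $f$ into $e+Z$. So the fibre statement you want cannot follow from continuity, injectivity and boundedness of $\varphi|_{L_S}$.

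Second, and decisively, such points would contradict nothing. Every $h_k$ vanishes on $F$, so every $g\in L_S$ agrees with $f$ on $F$, i.e.\ $L_S\subseteq V$. Your repair is circular: after enlarging $F$ to $F'$ by finitely many $x_k$ and choosing $S$ disjoint from those indices, the $h_k$ with $k\in S$ vanish on $F'$, and again $L_S\subseteq V'$. As long as you only perturb $f$ in directions on which $V$ imposes no constraint, continuity of $\varphi^{-1}$ at $e$ cannot be violated.

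The missing idea is to convert continuity of $\varphi^{-1}$ into a statement about functionals, and play it against continuity of $\varphi$ for uncountably many linearly independent functionals at once. Your inclusion is exactly what the paper uses in Claim 1. Translate so that $e=0$ and $f=\underline{0}$, and take $V=O(F,m)$. Then any $y^\ast$ with $\varphi(O(F,m))\subseteq W(y^\ast)$ is bounded on the subspace $Z$, hence vanishes on $Z$ and lies in $\spa(x_1^\ast,\dots,x_n^\ast)$.

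Now choose a linearly independent norm-compact copy $K$ of $[0,1]$ in $E^\ast$ (Klee--Mazur). Each finite $F$ then works for only finitely many $y^\ast\in K$, whereas continuity of $\varphi$ says every $y^\ast\in K$ works for some $(F,m)$. The paper combines the Baire category theorem on $K$ (with a minimal cardinality $k_0$), the $\Delta$-system lemma and the point-finite lemma. These give:
\begin{itemize}
\item $y_n^\ast\to y^\ast$ in norm;
\item sets $F_n=F\cup H_n$ with $|F|<k_0$;
\item $f\in O(F,m_0)$ with $|y^\ast(\varphi(f))|>1$;
\item $g_n=(1-f_n)f\to f$ with $g_n\in O(F_n,m_0)$, whence $|y_n^\ast(\varphi(g_n))|\le 1$.
\end{itemize}
Norm-boundedness of $\{\varphi(g_n)\}$ --- the one ingredient you already have --- together with $\|y_n^\ast-y^\ast\|\to 0$ gives the contradiction. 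Note that the $g_n$ are obtained by forcing $f$ to vanish on the petals $H_n$, i.e.\ by acting on coordinates constrained by $O(F_n,m_0)$. This is exactly what your construction of $L_S$ avoids.
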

Theorem \ref{main} generalizes a recent result of Kania and K\k{a}kol \cite{KK}, who proved it in the case when $X$ is compact, thereby answering (in a stronger way) a question posed by the present authors (see \cite[Problem 2]{K}, \cite[Problem 1.2]{KM}) as to whether the space $C_p(K)$ over an infinite compactum $K$ can be homeomorphic to a space $(C(L),w)$, for some compact space $L$. However, a broader problem, due to K\k{a}kol, Leiderman and Michalak -- that of characterizing Banach spaces $E$ for which $(E,w)$ is homeomorphic to $C_p(X)$, for some Tychonoff space $X$ (see \cite[Problem 1.5]{KLM}) -- remained open (cf. \cite[p.~4]{KK}). From Theorem \ref{main}, we immediately deduce a complete classification of homeomorphisms between $C_p(X)$ ($C^*_p(X)$) and $(E,w)$:
\begin{cor}
 Let $X$ be a topological space and let $E$ be a normed space. Then $C_p(X)$ ($C^*_p(X)$) is homeomorphic to $(E,w)$ if and only if $X$ is finite and $\dim E=|X|$.
\end{cor}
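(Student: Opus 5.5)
The corollary is to be proved; Theorem \ref{main} may be assumed. I treat $C_p(X)$; the argument for $C^*_p(X)$ is identical.

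\emph{Sufficiency.} Suppose $X$ is finite, say $|X|=n$. Since every space here is Tychonoff, $X$ is $T_1$, so a finite $X$ is discrete. Hence $C_p(X)=C^*_p(X)=\mathbb{R}^X\cong\mathbb{R}^n$ with the product (Euclidean) topology. If $\dim E=n$, then $E$ is finite-dimensional. On a finite-dimensional normed space all Hausdorff vector topologies coincide, so the weak topology equals the norm topology. Therefore $(E,w)\cong\mathbb{R}^n$, and this settles the ``if'' direction.

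\emph{Necessity.} Suppose $h:C_p(X)\to(E,w)$ is a homeomorphism. In particular $h$ is a condensation whose inverse is continuous. If $X$ were infinite, Theorem \ref{main} would say that $h^{-1}$ is discontinuous at every point of $E$, which is a contradiction. So $X$ is finite, say with $n$ points, and $C_p(X)\cong\mathbb{R}^n$.

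It remains to show $\dim E=n$.

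\begin{itemize}
\item \emph{$E$ is finite-dimensional.} If $\dim E$ were infinite, every weakly open neighbourhood of $0$ would contain a nontrivial linear subspace: the kernel of finitely many functionals, which has finite codimension. In particular no weak neighbourhood of $0$ would be bounded in norm, and $(E,w)$ would not be locally compact. Indeed, a compact neighbourhood $K$ would contain a line $x+\mathbb{R}y$ with $y\neq0$. By Hahn--Banach there is a functional $f$ with $f(y)\neq0$, and $f$ would then be unbounded on this line, so on $K$. That contradicts weak compactness of $K$. On the other hand, $\mathbb{R}^n$ is locally compact, so $\dim E<\infty$.
\item \emph{The dimensions agree.} Once $E$ is finite-dimensional, $(E,w)\cong\mathbb{R}^{\dim E}$ as in the sufficiency part. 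Invariance of dimension (Brouwer) then gives $\dim E=n=|X|$.
\end{itemize}

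The only substantive input is Theorem \ref{main}, which excludes infinite $X$. The one remaining point needing an argument is the local-compactness step showing that $(E,w)$ is infinite-dimensional-incompatible with $\mathbb{R}^n$; everything else is a routine identification.
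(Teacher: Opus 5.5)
Your proof is correct and follows the paper's route: Theorem \ref{main} rules out infinite $X$, and the rest is the identification $C_p(X)=C^*_p(X)=\mathbb{R}^{|X|}$ for finite $X$, together with the fact that $(E,w)\cong\mathbb{R}^n$ forces $\dim E=n$. The paper calls the corollary immediate and leaves that last point implicit; your local-compactness argument combined with invariance of dimension is a sound way to fill it in.
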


Conversely, if we relax homeomorphisms to condensations (i.e. continuous bijections), such mappings exist for a broad class of spaces. We shall prove the following:

\begin{prop}\label{condensation may exist}
Let $X$ be a $\sigma$-compact non-discrete metrizable space and let $E$ be a separable infinite-dimensional Banach space. Then there exists a condensation $\varphi:C_p(X)\to (E,w)$. Similarly, there exists a condensation $\psi:C_p^\ast(X)\to (E,w)$.
\end{prop}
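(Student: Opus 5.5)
The plan is to factor the desired map through the \emph{norm} topology. The identity $(E,\|\cdot\|)\to(E,w)$ is a condensation, so it suffices to produce a condensation of $C_p(X)$ (resp.\ $C^*_p(X)$) onto $(E,\|\cdot\|)$. By the Anderson--Kadec theorem, every separable infinite-dimensional Banach space is homeomorphic to $\mathbb{R}^\omega$. The statement therefore reduces to a purely topological one: \emph{find condensations $C_p(X)\to\mathbb{R}^\omega$ and $C_p^*(X)\to\mathbb{R}^\omega$.} Composing such a condensation with a homeomorphism $\mathbb{R}^\omega\to(E,\|\cdot\|)$ and then with the identity onto $(E,w)$ gives $\varphi$ and $\psi$.

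To obtain the condensation onto $\mathbb{R}^\omega$, I would first place $C_p(X)$ inside a Polish space. Since $X$ is $\sigma$-compact and metrizable, it is separable. Fix a countable dense set $D\subseteq X$; the restriction map $f\mapsto f|D$ is then a condensation of $C_p(X)$ onto a subspace $Y\subseteq\mathbb{R}^D\cong\mathbb{R}^\omega$.

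Next I would check that $Y$ is an absolute Borel set. Write $X=\bigcup_n K_n$ with the $K_n$ compact. Then $g\in\mathbb{R}^D$ extends continuously to $X$ if and only if, for every $n$ and every $\varepsilon>0$ (rational), there is $\delta>0$ (rational) such that $|g(d)-g(d')|\le\varepsilon$ whenever $d,d'\in D$ are $\delta$-close and lie near $K_n$. Arranging the quantifiers over countable sets gives an explicit Borel description, so $Y$ is absolute Borel (in fact of low Borel class). The same argument works for $C^*_p(X)$, with the additional countable condition $\sup_D|g|<\infty$.

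I also need that $Y$ is not $\sigma$-compact. Here non-discreteness is used: $X$ contains a nontrivial convergent sequence $x_n\to x$ with the $x_n$ pairwise distinct. Choosing pairwise disjoint neighbourhoods of the $x_n$ and bump functions supported in them with prescribed heights, one embeds a closed copy of the Baire space $\omega^\omega$ into $C_p(X)$ and into $C^*_p(X)$. For example, heights tending to $0$ but decaying at an arbitrary prescribed rate keep the functions continuous at $x$ and bounded. By Hurewicz's theorem, a closed copy of $\omega^\omega$ witnesses that a Borel space is not $\sigma$-compact.

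Finally I would invoke the classical condensation theorem for Borel sets, due to Pytkeev and also in work of Banakh and others: every absolute Borel separable metrizable space that is not $\sigma$-compact admits a condensation onto any Polish space, in particular onto $\mathbb{R}^\omega$. Since $Y$, and hence $C_p(X)$, satisfies both hypotheses, this yields the desired condensation.

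The main obstacle is this last step. If that theorem cannot simply be cited, I would prove it directly. Using the closed copy $F\cong\omega^\omega$, write $Y=F\sqcup(Y\setminus F)$. Condense $Y\setminus F$, which is a Borel set and thus a countable union of disjoint closed pieces, onto a suitable part of $\mathbb{R}^\omega$. Then use the flexibility of $\omega^\omega$, which condenses onto any Polish space and also onto Polish spaces with a countable set removed, to fill in the complement bijectively. The delicate point is ensuring that the glued map remains continuous at points of $F$ approached by $Y\setminus F$. For this I would rely on $F$ being closed and choose the pieces so that this continuity holds.
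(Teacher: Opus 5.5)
Your outer reduction is the same as the paper's. You restrict to a countable dense $D$, reduce to finding a condensation onto $\mathbb{R}^\omega$, and finish with Anderson--Kadec and the identity $(E,\|\cdot\|)\to(E,w)$. The gap is in the last step.

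The theorem you invoke, that every non-$\sigma$-compact absolute Borel space condenses onto \emph{any} Polish space, is false as stated. Your $Y$ is a linear subspace of $\mathbb{R}^D$, hence connected, so it cannot condense onto the Cantor set. The result the paper actually uses, Pytkeev's theorem, gives a condensation onto the Hilbert cube $[0,1]^\omega$. That does not finish the job: no condensation of the compact space $[0,1]^\omega$ onto $\mathbb{R}^\omega$ exists, so you cannot reach $\mathbb{R}^\omega$ by composing. So the step that needs proof, a condensation of $Y$ onto the non-compact space $\mathbb{R}^\omega$, is not supplied by any cited result.

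Your fallback construction does not supply it either. Continuity of the glued map at points of $F$ that are limits of points of $Y\setminus F$ is the whole difficulty; it is essentially the content of Pytkeev-type theorems. ``Choose the pieces so that this continuity holds'' is not an argument.

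The missing idea is to split off an $\mathbb{R}^\omega$ factor before applying Pytkeev. The paper does this as follows:
\begin{enumerate}[(i)]
\item By the result it quotes from Dobrowolski--Marciszewski, $C_D(X)$ and $C^*_D(X)$ are both homeomorphic to $\sigma^\omega$.
\item Since $\sigma\times\mathbb{R}\cong\sigma$, one has $\sigma^\omega\cong\sigma^\omega\times\mathbb{R}^\omega$.
\item Applying Pytkeev only to the first factor gives a condensation onto $[0,1]^\omega\times\mathbb{R}^\omega$.
\item By Toru\'nczyk's factor theorem, $[0,1]^\omega\times\mathbb{R}^\omega$ is homeomorphic to $\mathbb{R}^\omega$.
\end{enumerate}
Any decomposition $Y\cong Y'\times\mathbb{R}^\omega$ with $Y'$ absolute Borel and not $\sigma$-compact would serve equally well. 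Once $Y\cong\sigma^\omega$ is in hand, your Borel-class computation and the Hurewicz argument become unnecessary.

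A smaller point concerns non-$\sigma$-compactness. It has to be checked for $Y$, with the topology of pointwise convergence on $D$, not for $C_p(X)$. Non-$\sigma$-compactness does not pass from the domain of a condensation to its image. Also, the naive bump-function copy need not be closed in $Y$. For $X=\omega+1$, the image of $c_0$ is not closed in the space $c$ of convergent sequences with the product topology: the truncations of the constant sequence $1$ converge to it. This is repairable, for instance via $Y\cong\sigma^\omega$, but as written the argument does not establish the hypothesis on $Y$.
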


To prove Theorem \ref{main}, we slightly modify the argument of Kania and K\k{a}kol from \cite{KK}. Their approach, in turn, relies heavily on the techniques developed by the present authors in \cite{KM}. In fact, the key observation made in \cite{KK} is that incorporating the classical Klee--Mazur result on linearly independent copies of an interval into the proof of Theorem 4.1 in \cite{KM} yields Theorem \ref{main} in the compact case. However, a careful examination of the proof given in \cite{KK} reveals that similar approach is available in the general case.

\section{Proofs}

First let us recall some auxiliary results that will be used in the proof of Theorem \ref{main}. The following lemma is well known and its proof goes back to the works of Klee and Mazur (see \cite[Ch. VIII, Proposition 2.2]{BP}).

\begin{lemma}(Klee-Mazur)\label{Klee-Mazur}
 Let $E$ be an infinite-dimensional Banach space. Then $(E^\ast,\|\cdot\|)$ contains a linearly independent topological copy $K$ of the unit interval $[0,1]$.
\end{lemma}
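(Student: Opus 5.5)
We need to prove the Klee--Mazur lemma: an infinite-dimensional Banach space $E$ has in $(E^\ast,\|\cdot\|)$ a linearly independent topological copy of $[0,1]$, i.e.\ a norm-continuous injective map $f:[0,1]\to E^\ast$ whose image is a linearly independent set. Since $[0,1]$ is compact and $E^\ast$ is Hausdorff, continuity and injectivity already make $f$ an embedding. We first reduce to the model space $\ell_2$, or to a space with a basic sequence. $E^\ast$ is an infinite-dimensional Banach space, so by the Mazur theorem it contains a basic sequence $(e_n^\ast)$. We may normalize it so that $\|e_n^\ast\|=1$, with basis constant $C$. Then every element $\sum a_n e_n^\ast$ of the closed span $Y$ with $\sum |a_n|<\infty$ converges, and its coefficients are recovered by the biorthogonal functionals, which are continuous on $Y$ with norm at most $2C$.

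Next we define the curve
\[
f(t)=\sum_{n=1}^\infty \frac{t^n}{2^n}\, e_n^\ast ,\qquad t\in[0,1].
\]
Each coefficient $t^n2^{-n}$ is continuous in $t$, and the series converges uniformly in norm, being dominated by $\sum 2^{-n}$. Hence $f$ is norm continuous. Injectivity follows from coefficient uniqueness in a basic sequence: already the first coefficients $t/2$ distinguish points.

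The main step is linear independence. Suppose $\sum_{i=1}^k c_i f(t_i)=0$ with $t_1,\dots,t_k$ distinct. Applying the $n$-th biorthogonal functional gives $\sum_i c_i t_i^n=0$ for every $n\ge1$. We take $n=1,\dots,k$. The matrix $(t_i^n)_{n,i}$ equals the Vandermonde matrix of the $t_i$ multiplied by $\operatorname{diag}(t_i)$. If all $t_i\neq0$, this matrix is nonsingular, so all $c_i=0$. If some $t_i=0$, then $f(t_i)=0$, which would destroy independence. To avoid this we restrict to $t\in[1/2,1]$, or equivalently replace $t$ by $(1+t)/2$. The Vandermonde argument then works with distinct nonzero nodes.

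The only potential obstacle is the existence of a basic sequence in $E^\ast$, which is the classical Mazur lemma. If one wishes to avoid it, an alternative is to construct inductively a normalized sequence $(e_n^\ast)$ together with $x_n\in E$ such that $e_n^\ast(x_m)=\delta_{nm}$ for $m\le n$, with $e_n^\ast$ vanishing on $x_1,\dots,x_{n-1}$. Such a sequence exists because $E$ is infinite-dimensional: the annihilator of a finite set is nonzero. Evaluating at $x_m$ then recovers coefficients in a triangular way, and the same Vandermonde argument with the evaluations at $x_1,\dots,x_k$ gives linear independence. This route is more self-contained, so I would use the triangular biorthogonal construction.
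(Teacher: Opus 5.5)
Your proof is correct. The paper gives no argument for this lemma of its own; it only refers to Bessaga--Pe\l czy\'nski. Your construction is the standard argument for this classical fact:
\begin{itemize}
\item the arc $t\mapsto\sum_n 2^{-n}t^n e_n^\ast$ is norm-continuous by uniform convergence;
\item you restrict it to $[1/2,1]$ so that $0$ is never hit;
\item linear independence comes from reading off the coefficients $\sum_i c_i t_i^n$ and invoking the nonsingular matrix $(t_i^n)_{n,i\le k}$ (Vandermonde times $\mathrm{diag}(t_i)$).
\end{itemize}

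Of your two ways of recovering coefficients, the triangular one is the more economical. It needs only Hahn--Banach, to pick $e_n^\ast$ of norm one annihilating $x_1,\dots,x_{n-1}$ and then any $x_n$ with $e_n^\ast(x_n)=1$, rather than Mazur's basic-sequence theorem. Since $e_n^\ast(x_m)=0$ for $n>m$, the conditions $\phi(x_1)=\dots=\phi(x_k)=0$ for $\phi=\sum_i c_i f(t_i)$ force, by induction on $m$, $\sum_i c_i t_i^m=0$ for all $m\le k$. This route also works verbatim when $E$ is merely normed, which is the generality in which the lemma is used in Theorem~\ref{main}. There one could equally pass to the completion of $E$, which has the same dual.
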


The next lemma is folklore and its proof is easy (see \cite[p. 2796]{Sa}).

\begin{lemma}\label{lemma_point-finite}
Let $(A_n)_{n\in \omega}$ be a sequence of pairwise disjoint subsets of a space $X$. Suppose that there is positive integer $M$ such that $|A_n|\leq M$ for all $n$. Then there is a subsequence $(A_{n_k})_{k\in\omega}$ and a sequence $(U_k)_{k\in \omega}$ of open subsets of $X$ such that $A_{n_k}\subseteq U_k$ and the family $\{U_k:k\in \omega\}$ is point-finite in $X$.
\end{lemma}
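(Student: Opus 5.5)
We argue by induction on $M$, using only the Tychonoff (indeed Hausdorff, together with regularity) property of $X$. The basic device is to pass repeatedly to subsequences, so we may freely replace $(A_n)$ by a subsequence at each step. For $M=1$, write $A_n=\{x_n\}$, distinct points. We first dispose of the case where the set $\{x_n\}$ has an infinite discrete-in-$X$ closed subset, or more generally find a subsequence admitting a point-finite open expansion. The cleanest route: a sequence of distinct points in a Hausdorff space always contains a subsequence $(x_{n_k})$ which is either \emph{discrete} (each $x_{n_k}$ has a neighbourhood missing the others) or converges to some point. Actually we do not even need that dichotomy. Choose the subsequence and neighbourhoods recursively as follows. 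It is classical that every infinite Hausdorff space contains an infinite family of pairwise disjoint nonempty open sets forming a "cellular sequence". We adapt this: inductively pick $k$-th point $x_{n_k}$ and open $U_k\ni x_{n_k}$ together with an open set $W_k$ containing infinitely many remaining points $x_n$, with $\overline{U_k}\cap \overline{W_k}=\emptyset$, $W_{k+1}\subseteq W_k$ and $U_{k+1}\subseteq W_k$. This is possible: given infinitely many distinct points in the open set $W_k$, take two of them, separate them by disjoint open sets with disjoint closures inside $W_k$ (regularity), and at least one of the two pieces, or its complement within $W_k$, contains infinitely many points. More carefully, pick distinct $a,b$ in $W_k$ among the remaining points with disjoint closed neighbourhoods $V_a,V_b\subseteq W_k$; the set $W_k\setminus \overline{V_a}$ or $W_k\setminus\overline{V_b}$ contains infinitely many remaining points (their union is $W_k$ minus at most... indeed their union is all of $W_k$). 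If $W_k\setminus\overline{V_a}$ works, set $x_{n_{k+1}}=a$, $U_{k+1}=\operatorname{int}V_a$, $W_{k+1}=W_k\setminus\overline{V_a}$; otherwise symmetrically with $b$.

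Then the $U_k$ are pairwise disjoint, hence trivially point-finite (each point in at most one). So disjointness is actually what we aim for in general, giving a stronger conclusion. For general $M$, we run the same recursion with finite sets: the sets $A_n$ are pairwise disjoint, of size at most $M$. By passing to a subsequence we may assume $|A_n|=m$ constant. We want open $U_k\supseteq A_{n_k}$ pairwise disjoint. Here the main obstacle appears: one cannot always separate, e.g. if the $A_n$ share a common accumulation pattern (take $A_n=\{p_n,q_n\}$ where $p_n\to p$ and $q_n$ discrete) then a union of disjoint neighbourhoods is fine, but for $A_n=\{x_n, y\_n\}$ with both converging, disjointness still works. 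Yet disjointness may fail in general only in principle; point-finiteness is what we need, so we handle coordinates separately: enumerate $A_n=\{a_n^1,\dots,a_n^m\}$. Apply the $M=1$ construction to the sequence $(a_n^1)$ to get a subsequence and pairwise disjoint open $U^1_k\ni a^1_{n_k}$; then apply it to $(a^2_{n_k})_k$ (distinct points since the $A_n$ are disjoint), obtaining a further subsequence with disjoint $U^2$'s, and so on through $m$ coordinates, thinning earlier families accordingly (subfamilies of disjoint families remain disjoint).

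Finally set $U_k=U^1_k\cup\dots\cup U^m_k$. Each point $x\in X$ lies in at most one $U^i_k$ for each fixed $i$, hence in at most $m\le M$ of the sets $U_k$. Thus $\{U_k\}$ is point-finite (even uniformly) and $A_{n_k}\subseteq U_k$. The only delicate step is the $M=1$ recursion, specifically ensuring infinitely many remaining points survive in $W_{k+1}$, which follows since $W_k=(W_k\setminus\overline{V_a})\cup(W_k\setminus\overline{V_b})$ as $\overline{V_a}\cap\overline{V_b}=\emptyset$.
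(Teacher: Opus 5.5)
Your proof is correct: the regularity-based recursion yields, for any one-to-one sequence, a subsequence with pairwise disjoint open neighbourhoods, because $U_{k+1}\cap W_{k+1}=\emptyset$ and every later $U_j$ lies in $W_{k+1}$. Thinning coordinate by coordinate and taking unions then gives an expansion in which each point lies in at most $m\le M$ of the sets $U_k$, with $m=0$ covering the case of empty $A_n$.

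The paper gives no proof of its own, only citing Sakai for this folklore fact, and yours is a natural elementary argument for it. Note that your construction gives $U_{k+1}\cap W_{k+1}=\emptyset$, not the stated $\overline{U_k}\cap\overline{W_k}=\emptyset$. This is harmless since only the former is used, and the exploratory remarks about disjointness for $M\ge 2$ can be dropped.
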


From Lemma \ref{lemma_point-finite} we immediately obtain the following (cf. \cite[Lemma 3.3]{KK}).

\begin{lemma}\label{lemma2}
 Let $(A_n)_{n\in \omega}$ be a sequence of pairwise disjoint subsets of a space $X$. Suppose that there is positive integer $M$ such that $|A_n|\leq M$ for all $n$. Then there is a subsequence $(A_{n_k})_{k\in\omega}$ and a sequence $(f_k)_{k\in \omega}$ of continuous functions $f_k:X\to [0,1]$ such that $f_k(A_{n_k})\subseteq \{1\}$ and the sequence $(f_k)_{k\in \omega}$ converges pointwise to the constant function equal to $0$.
\end{lemma}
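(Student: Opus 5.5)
We prove Lemma \ref{lemma2} by applying Lemma \ref{lemma_point-finite} and then using complete regularity of $X$ to turn the point-finite open sets into functions.

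First, apply Lemma \ref{lemma_point-finite} to $(A_n)_{n\in\omega}$. This gives a subsequence $(A_{n_k})_{k\in\omega}$ and open sets $U_k\supseteq A_{n_k}$ such that the family $\{U_k:k\in\omega\}$ is point-finite in $X$.

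Next, fix $k$. The set $A_{n_k}$ is finite, with at most $M$ points, and lies in the open set $U_k$. For each $a\in A_{n_k}$, complete regularity gives a continuous $g_a:X\to[0,1]$ with $g_a(a)=1$ and $g_a\equiv 0$ on $X\setminus U_k$. Put
\[
f_k=\max_{a\in A_{n_k}} g_a,
\]
and if $A_{n_k}=\emptyset$ put $f_k\equiv 0$. Then $f_k$ is continuous, takes values in $[0,1]$, equals $1$ on $A_{n_k}$, and vanishes outside $U_k$.

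Finally, we check pointwise convergence. Take $x\in X$. By point-finiteness, $x$ belongs to only finitely many of the sets $U_k$. Hence $x\notin U_k$ for all sufficiently large $k$, and for those $k$ we have $f_k(x)=0$. So $f_k(x)\to 0$ for every $x$, i.e., $(f_k)_{k\in\omega}$ converges pointwise to the constant function $0$.

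There is no real obstacle here. All the substance lies in Lemma \ref{lemma_point-finite}, which we may assume. The only point needing care is that each $A_{n_k}$ is finite, so the maximum of finitely many Urysohn-type functions is still continuous.
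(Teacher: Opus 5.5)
Your proof is correct and follows the paper's route: the paper says this lemma follows ``immediately'' from Lemma \ref{lemma_point-finite}. Building, via complete regularity, a function $f_k$ equal to $1$ on the finite set $A_{n_k}$ and vanishing off $U_k$, and then using point-finiteness to get $f_k\to\underline{0}$ pointwise, is exactly the routine step the paper leaves unstated.
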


Before proceeding to the proof of Theorem \ref{main} let us fix some notation. By $\underline{0}\in C_p(X)$ we denote the constant function equal to zero. For a finite $F\subseteq X$ and a positive integer $m$ the set
$$O\left(F, m\right)=\{f\in C_p(X):f(F)\subseteq \left(-\tfrac{1}{m},\tfrac{1}{m}\right)\}$$
is a basic neighborhood of $\underline{0}$ in $C_p(X)$. For a Banach space $E$ and $y^\ast\in E^\ast$, we set
$$W(y^\ast)=\{x\in E:|y^\ast(x)|\leq 1\}.$$
For $k\in \omega$, by $[X]^k$ we denote the family of all $k$-element subsets of $X$. If $A\subseteq X$ is a subspace of a space $X$ and $B\subseteq A$, then the symbols $\Int_A(B)$, $\cl_A(B)$ stand for the interior, respectively, the closure of $B$ in $A$.

\begin{proof}[Proof of Theorem \ref{main}]
Fix a condensation $\varphi:C_p(X)\to (E,w)$. If instead of $C_p(X)$ we considered the space $C_p^\ast(X)$ of bounded continuous functions, the proof would remain exactly the same (simply by replacing $C_p(X)$ with $C_p^\ast(X)$ everywhere below).

Striving for a contradiction, suppose that $\varphi^{-1}:(E,w)\to C_p(X)$ is continuous at some point $e_0\in E$. Using appropriate translations, we can assume without loss of generality that $e_0=0$ and $\varphi(\underline{0})=0$. First, observe that $E$ is infinite-dimensional. Indeed, if $\dim E=k<\infty$, then $(E,w)$ is linearly homeomorphic to $\mathbb{R}^k$. Since $X$ is infinite, the space $C_p(X)$ contains a topological copy of the cube $[0,1]^{k+1}$, so it cannot be condensed onto $(E,w)$. Thus $E$ is infinite-dimensional. Let $K\subseteq E^\ast$ be a copy of $[0,1]$ provided by Lemma \ref{Klee-Mazur}.

For $k\in \omega$ and $m\geq 1$, define
$$Z_{k,m}=\left\{(F,y^\ast)\in [X]^k\times K: \varphi\big(O(F,m)\big)\subseteq W(y^\ast)\right\}.$$
Since $\varphi$ is surjective, we have $Z_{0,m}=\emptyset$ for all $m\geq 1$.

Let $\pi_2:[X]^k\times K\to K$ be the projection and set
$$C_{k,m}=\pi_2(Z_{k,m}).$$
Note that by the continuity of $\varphi$, we have
\begin{equation}\label{equation1}
 K=\bigcup\{C_{k,m}:k\in \omega, m\geq 1\},
\end{equation}
an observe that 
\begin{equation}\label{equation1a}
 C_{k,m} \subseteq C_{l,m}\quad \mbox{for } k,l \in \omega, k<l, m\geq 1.
\end{equation}

\begin{claimnumber}
For every $k\in \omega$, $m\geq 1$ and $F\in [X]^k$, the set $\{y^\ast\in K:(F,y^\ast)\in Z_{k,m}\}$ is finite.
\end{claimnumber}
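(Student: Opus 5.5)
Fix $k$, $m$ and $F\in[X]^k$, and put $S=\{y^\ast\in K:(F,y^\ast)\in Z_{k,m}\}$. The plan is to show that every $y^\ast\in S$ lies in a fixed finite-dimensional subspace of $E^\ast$ determined by $F$. Since $K$ is linearly independent, such a subspace can contain only finitely many points of $K$, and the claim follows.

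\textbf{Step 1: a linear subspace inside $W(y^\ast)$.} Consider
$$L=\{f\in C_p(X): f|_F=0\}.$$
This is a linear subspace of $C_p(X)$ and $L\subseteq O(F,m)$. Since $\varphi$ need not be linear, I will not use $\varphi(L)$ directly. Instead I will use a cone-type property of $O(F,m)$: for every $f\in O(F,m)$ and every $g\in L$ we have $f+g\in O(F,m)$. Taking $f=\underline{0}$ gives only $\varphi(L)\subseteq W(y^\ast)$, which by itself says little because $\varphi$ is nonlinear.

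\textbf{Step 2: finite codimension of the preimage of the kernel.} The better route is set-theoretic. For $y^\ast\in S$, the open set $\{e:|y^\ast(e)|>1\}$ is weakly open and nonempty, since $y^\ast\neq 0$ by linear independence of $K$. Its preimage under $\varphi$ is a nonempty open subset of $C_p(X)$ missing $O(F,m)$.

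Now suppose $S$ contains $n>k$ points $y_1^\ast,\dots,y_n^\ast$. They are linearly independent, so the map $T=(y_1^\ast,\dots,y_n^\ast):E\to\mathbb{R}^n$ is a linear surjection. Hence $T(E)=\mathbb{R}^n$, while
$$T\bigl(\varphi(O(F,m))\bigr)\subseteq[-1,1]^n .$$
So it suffices to show that $T\circ\varphi\bigl(O(F,m)\bigr)$ must be unbounded once $n>k$.

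\textbf{Step 3: dimension argument.} Set
$$A=\{f\in C_p(X): f(F)\subseteq[-\tfrac1{2m},\tfrac1{2m}]\}\subseteq O(F,m).$$
This set is homeomorphic to $[-\tfrac1{2m},\tfrac1{2m}]^k\times L$ via the map $f\mapsto(f|_F,\,f-s(f|_F))$, where $s$ is a continuous linear extension operator from $\mathbb{R}^F$ to $C_p(X)$ built from Urysohn-type functions.

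The map $\Phi=T\circ\varphi$ is continuous from $C_p(X)$ onto $\mathbb{R}^n$. Since $\varphi$ is a bijection, $\Phi$ is onto, and $\Phi(A)\subseteq[-1,1]^n$. We have $C_p(X)=\bigcup_{j} jA$ in a suitable sense, but $\Phi$ is not homogeneous, so this does not immediately help.

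The main obstacle is here, in turning boundedness of $\Phi$ on $A$ into a contradiction. My first attempt is to use injectivity of $\varphi$ together with a copy of a cube $[0,1]^{n}$ inside $L\cap O(F,m)$. Such a copy exists because $X\setminus F$ is infinite: take functions with disjoint supports away from $F$.

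This alone does not give a contradiction, since $\varphi$ is injective but $\Phi$ need not be. So I would instead refine the choice, passing to the infinitely many points of $S$. Then $\varphi(O(F,m))$ would be contained in $\bigcap_{y^\ast\in S}W(y^\ast)$ for an infinite linearly independent, norm-relatively-compact set $S$. I would combine this with the fact that $\varphi^{-1}$ is continuous at $0$ to obtain a contradiction, by comparing the weak neighbourhood of $0$ that maps into $O(F,m)$ with these constraints.
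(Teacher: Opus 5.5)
There is a gap. Steps 1--3 do not establish the claim, as you acknowledge, and the last paragraph names the relevant hypothesis without extracting anything from it.

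The reduction in Step 2 (show that $T\circ\varphi(O(F,m))$ is unbounded once $n>k$) cannot be completed with what Step 3 uses. Continuity and surjectivity of $\Phi$, together with the product structure of $A$, impose no restriction on $\Phi(O(F,m))$. For instance, if $F\neq\emptyset$ and $x_1,\dots,x_n\in X\setminus F$ are distinct, then
$$f\mapsto \max\Bigl(0,\ \max_{x\in F}|f(x)|-\tfrac1m\Bigr)\cdot\bigl(f(x_1),\dots,f(x_n)\bigr)$$
is a continuous surjection $C_p(X)\to\mathbb{R}^n$ that vanishes on $O(F,m)$, for every $n$. Injectivity of $\varphi$ is lost after composing with $T$, as you note, so a dimension count on the $C_p(X)$ side has nothing to work with. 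Also, the number $k=|F|$ is not what bounds the set in the claim.

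What is missing is the standing assumption of the surrounding proof by contradiction: $\varphi^{-1}$ is continuous at $0$, with $\varphi(\underline{0})=0$. None of Steps 1--3 uses it, and it is what drives the paper's argument. Since $\underline{0}\in O(F,m)$, there are $z_1^\ast,\dots,z_p^\ast\in E^\ast$ and $\varepsilon>0$ such that $W'=\{e\in E:|z_i^\ast(e)|<\varepsilon \text{ for all } i\le p\}$ satisfies $\varphi^{-1}(W')\subseteq O(F,m)$. By surjectivity, $W'=\varphi(\varphi^{-1}(W'))\subseteq\varphi(O(F,m))\subseteq W(y^\ast)$ for every $y^\ast$ in the set of the claim. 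Now $W'$ contains the linear subspace $N=\bigcap_{i\le p}\ker z_i^\ast$, and a linear functional bounded by $1$ on a linear subspace vanishes on it. Hence $N\subseteq\ker y^\ast$, which forces $y^\ast\in\spa(z_1^\ast,\dots,z_p^\ast)$. Since $K$ is linearly independent, it meets this span in at most $p$ points. The bound $p$ comes from the neighbourhood witnessing continuity of $\varphi^{-1}$ at $0$, not from $k$.
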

\begin{proof}
Since $\varphi^{-1}$ is continuous at 0, there exist $y_1^\ast,\ldots , y_n^\ast\in E^\ast$ and $\varepsilon>0$ such that the following neighborhood
$$W'=\{e\in E:\forall i\leq n\;\; |y_i^\ast(e)|<\varepsilon\}$$
of 0 in $(E,w)$ satisfies
\begin{equation}\label{equation2}
\varphi^{-1}(W')\subseteq O(F,m).
\end{equation}
Take any $y^\ast\in K$ such that $(F,y^\ast)\in Z_{k,m}$, i.e.,
$\varphi(O(F,m))\subseteq W(y^\ast)$
(if no such $y^\ast$ exists, we are done, as the set considered in Claim 1 is empty).

We claim that $y\in \spa(y_1^\ast,\ldots , y_n^\ast)$. Indeed, otherwise
$$\bigcap_{i\leq n}\ker(y_i^\ast)\nsubseteq \ker(y^\ast).$$
where $\ker(z^\ast)$ denotes the kernel of the functional $z^\ast$ (cf. \cite[Lemma 3.9]{yellowbook})
This means that there is $y\in \bigcap_{i\leq n}\ker(y_i^\ast)$ with $y^\ast(y)\neq 0$. Scaling $y$ if necessary we can assume that $y^\ast(y)=2$. Consequently, $y\in W'$. Hence, by \eqref{equation2}, $\varphi^{-1}(y)\in O(F,m)$, whence $\varphi(\varphi^{-1}(y))=y\in W(y^\ast)$. This is a contradiction with $y^\ast(y)=2$.

Now, to conclude the proof of Claim 1 it suffices to note that the intersection $K\cap\spa(y_1^\ast,\ldots , y_n^\ast)$ is finite, because $K$ is linearly independent in $E^\ast$.
\end{proof}
By \eqref{equation1} and the Baire category theorem, there exists $k\in \omega$ and $m\geq 1$ such that
\begin{equation}\label{equation3}
\Int_K(\cl_K(C_{k,m}))\neq \emptyset.
\end{equation}
Let $k_0$ be the minimal $k$ such that \eqref{equation3} holds for some $m\geq 1$. Pick $m_0\geq 1$ such that
$$U_0=\Int_K(\cl_K(C_{k_0,m_0}))\neq\emptyset.$$
By minimality of $k_0$, we have $\Int_K(\cl_K(C_{k_{0}-1,m}))=\emptyset$ for all $m\geq 1$. Thus, we can find a nonempty open set $V\subseteq U_0$ satisfying
\begin{equation}\label{equation4}
V\cap C_{k_0-1,m_0}=\emptyset.
\end{equation}
Since $C_{k_0,m_0}$ is dense in $U_0$, which is open in a topological copy $K$ of $[0,1]$, we can find a one-to-one sequence $y^\ast_0, y^\ast_1,\ldots \in V\cap C_{k_0,m_0}$ norm-convergent to $y^\ast\in  (V\cap C_{k_0,m_0})\setminus\{y_0^\ast,y_1^\ast,\ldots\}$. For each $n\in \omega$, find $F_n\in [X]^{k_0}$ witnessing $y_n^\ast\in C_{k_0,m_0}$, that is,
\begin{equation}\label{F_n}
(F_n,y_n^\ast)\in Z_{k_0,m_0}.
\end{equation}
Since $y_0^\ast,y_1^\ast,\ldots$ are pairwise distinct, Claim 1 implies that the mapping\break $T:\{y_0^\ast,y_1^\ast,\ldots\} \to [X]^{k_0}$ defined by $T(y_n^\ast)=F_n$ has finite fibers. Hence, by passing to a subsequence, if necessary, we can without loss of generality assume that all sets $F_n$ are pairwise distinct.

Using a suitable version of the $\Delta$-System Lemma (see \cite[A.1.4]{vM}) and passing to a subsequence once more, we can further assume that there is a set $F\subseteq X$ with $|F|<k_0$ such that $F_n=F\cup H_n$, where $\{H_n:n\in \omega\}$ are nonempty and pairwise disjoint. Since $y^\ast\in V$ and $|F|<k_0$, it follows from \eqref{equation1a} and \eqref{equation4} that $y^\ast\notin C_{|F|,m}\subseteq C_{k_{0}-1,m}$. Hence, we can find $f\in O(F,m_0)$ such that $|y^\ast(\varphi(f))|>1$.

Applying Lemma \ref{lemma2} and passing to a subsequence, we obtain a sequence of continuous functions $f_n:X\to [0,1]$ such that $f_n(H_n)\subseteq \{1\}$ and the sequence $(f_n)_{n\in \omega}$ converges pointwise to $\underline{0}$.
Put
$$g_n=(1-f_n)\cdot f.$$
Clearly $g_n\rightarrow f$ pointwise so by the continuity of $\varphi$, we have $\varphi(g_n)\rightarrow \varphi(f)$ in $(E,w)$. In particular,
\begin{equation}\label{equation5}
 y^\ast(\varphi(g_n))\rightarrow y^\ast(\varphi(f)).
\end{equation}
Note also that $g_n(H_n)\subseteq \{0\}$ and $|g_n(x)|\leq |f(x)|<\tfrac{1}{m_0}$ for $x\in F$. Hence, $g_n\in O(F_n,m_0)$. Since $(F_n,y_n^\ast)\in Z_{k_0,m_0}$ (see \eqref{F_n}), we get
\begin{equation}\label{equation6}
 |y_n^\ast(\varphi(g_n))|\leq 1.
\end{equation}
Being a convergent sequence, the set $\{\varphi(g_n):n\in \omega\}$ is relatively compact in $(E,w)$ whence, it is norm-bounded. Since additionally $\|y^\ast-y_n^\ast\|\rightarrow 0$, we get
$$\lim_{n\to\infty}|y^\ast(\varphi(g_n))-y_n^\ast(\varphi(g_n))|=0.$$
This gives a desired contradiction since on the one hand $|y^\ast(\varphi(g_n))|\rightarrow |y^\ast(\varphi(f))|>1$, by \eqref{equation5}, while on the other hand $|y_n^\ast(\varphi(g_n))|\leq 1$ according to \eqref{equation6}.
\end{proof}

\begin{proof}[Proof of Proposition \ref{condensation may exist}]
We will construct condensations $\varphi:C_p(X)\to (E,w)$ and $\psi:C^*_p(X)\to (E,w)$ via compositions of seven condensations $\varphi=\varphi_7\circ \dotsb \circ \varphi_1$ and $\psi=\psi_7\circ \dotsb \circ \psi_1$. Let
$$\sigma=\{(x_n)\in\mathbb{R}^\omega: |\{n\in \omega:x_n\neq 0\}|<\aleph_0\}$$
be the subspace of the countable product of real lines consisting of all finitely supported sequences. Fix a countable dense subset $D$ of a $\sigma$-compact non-discrete metrizable space $X$. Let
$$C_D(X)=\{f\upharpoonright D:f\in C_p(X)\}\quad \text{and}\quad C_D^\ast(X)=\{f\upharpoonright D:f\in C_p^\ast(X)\}.$$
treated as subspaces of $\mathbb{R}^D$. Let $\varphi_1:C_p(X)\to C_D(X)$, respectively, $\psi_1:C_p^\ast(X)\to C_D(X)$ be condensations given by $f\mapsto f\upharpoonright D$.

By \cite[Theorem 6.2]{DM} both $C_D(X)$ and $C_D^\ast(X)$ are homeomorphic to $\sigma^\omega$. Let $\varphi_2:C_D(X)\to \sigma^\omega$ ($\psi_2:C_D^\ast(X)\to \sigma^\omega$) be a homeomorphism. It is readily seen that $\sigma^\omega$ is homeomorphic to $\sigma^\omega\times \mathbb{R}^\omega$. Fix a homeomorphism $\varphi_3:\sigma^\omega\to\sigma^\omega\times \mathbb{R}^\omega$. By Pytkeev's theorem (see \cite[Theorem 1]{Py}) there exists a condensation $\zeta:\sigma^\omega\to [0,1]^\omega$. Define $\varphi_4:\sigma^\omega\times\mathbb{R}^\omega\to [0,1]^\omega\times\mathbb{R}^\omega$ as $\varphi_4=\zeta\times id_{\mathbb{R}^\omega}$.

Using Toruńczyk's Factor Theorem (see \cite[Theorem 4.2.25]{vM} and \cite[Theorem 3.1]{To}), we can find a homeomorphism $\varphi_5:[0,1]^\omega\times \mathbb{R}^\omega\to \mathbb{R}^\omega$. According to the Anderson-Kadec Theorem (see \cite[Ch. VI, Theorem 5.2]{BP}), for every separable infinite dimensional Banach space $E$, there is a homeomorphism $\varphi_6:\mathbb{R}^\omega\to (E,\|\cdot\|)$. Finally, let $\varphi_7:(E,\|\cdot\|)\to (E,w)$ be the identity map. For $i\in\{3,\ldots , 7\}$, define $\psi_i=\varphi_i$ and set
\begin{eqnarray*}
\varphi &=& \varphi_7\circ \dotsb \circ \varphi_1:C_p(X)\to (E,w) \quad \mbox{and}\\
\psi &=& \psi_7\circ \dotsb \circ \psi_1:C_p^\ast(X)\to (E,w). 
\end{eqnarray*}
\end{proof}

\section{The pointwise topology vs. the weak* topology}
Making obvious modifications in the proof of Theorem \ref{main}, we get the following:
\begin{thrm}\label{main_weak^*}
Let $X$ be an infinite topological space and let $E$ be a normed space. If $\varphi:C_p(X)\to (E^\ast,w^\ast)$ is a condensation, then its inverse $\varphi^{-1}$ is discontinuous at every point of $E^\ast$. An analogous assertion holds if $C_p(X)$ is replaced by $C^*_p(X)$.
\end{thrm}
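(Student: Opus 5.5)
The plan is to run the proof of Theorem \ref{main} verbatim with the roles of $E$ and $E^\ast$ exchanged. The weak* topology on $E^\ast$ is generated by the evaluations at points of $E$. So the neighbourhoods $W(y^\ast)$ become $W(y)=\{e^\ast\in E^\ast:|e^\ast(y)|\leq 1\}$ for $y\in E$, and the compact set $K$ must be a linearly independent topological copy of $[0,1]$ in $(E,\|\cdot\|)$ rather than in $(E^\ast,\|\cdot\|)$.

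First, exactly as before, $E$ must be infinite-dimensional. If $\dim E=k<\infty$, then $(E^\ast,w^\ast)\cong\mathbb{R}^k$, while $C_p(X)$ contains $[0,1]^{k+1}$. Next we need $K\subseteq E$ linearly independent and homeomorphic to $[0,1]$. When $E$ is a Banach space we take the Klee--Mazur construction (Lemma \ref{Klee-Mazur}) applied in $E$ itself; the argument in \cite{BP} uses only completeness of the ambient space. We then define
$$Z_{k,m}=\{(F,y)\in[X]^k\times K:\varphi(O(F,m))\subseteq W(y)\}$$
and $C_{k,m}=\pi_2(Z_{k,m})$. Continuity of $\varphi$ gives \eqref{equation1} and \eqref{equation1a}.

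The analogue of Claim 1 goes through unchanged. Continuity of $\varphi^{-1}$ at $0$ gives $y_1,\dots,y_n\in E$ and $\varepsilon>0$ with $\varphi^{-1}(W')\subseteq O(F,m)$. If $y\notin\spa(y_1,\dots,y_n)$, the Hahn--Banach theorem yields $e^\ast\in E^\ast$ vanishing on the $y_i$ with $e^\ast(y)=2$, which is a contradiction. Hence $y\in\spa(y_1,\ldots,y_n)$, and linear independence of $K$ makes the relevant set finite. The Baire category argument, the choice of $k_0,m_0,V$, the norm-convergent sequence $y_n\to y$ in $K$, and the use of the $\Delta$-system lemma and Lemma \ref{lemma2} are then purely combinatorial and transfer verbatim. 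This produces $g_n\to f$ with $|\varphi(g_n)(y_n)|\leq1$ and $\varphi(g_n)(y)\to\varphi(f)(y)$, where $|\varphi(f)(y)|>1$.

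The final estimate needs $\sup_n\|\varphi(g_n)\|<\infty$. Then $|\varphi(g_n)(y)-\varphi(g_n)(y_n)|\leq\|\varphi(g_n)\|\,\|y-y_n\|\to0$, which gives the contradiction. For Banach $E$ this holds because $(\varphi(g_n))$ is weak* convergent, hence pointwise bounded on $E$, hence norm-bounded by the uniform boundedness principle.

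I expect the main obstacle to be the case of a non-complete normed space $E$, which affects two steps. The Klee--Mazur copy must lie in $E$ and not only in its completion; this is impossible when $E$ has countable Hamel dimension. Uniform boundedness must also hold. For boundedness I would first try to place $K$ inside a Banach disc $B\subseteq E$, so that Banach--Steinhaus applies on $\spa B$; this suffices because only $y,y_n\in K$ are tested. For the existence of $K$ I would try to avoid it altogether when $\dim E$ is small. If $E$ has countable dimension, $(E^\ast,w^\ast)$ embeds into $\mathbb{R}^\omega$ via a Hamel basis. I would then attempt a direct argument that a condensation of $C_p(X)$ onto such a space cannot have an inverse continuous at a point, since continuity at a point would make small neighbourhoods in $C_p(X)$ determined by countably many coordinates. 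If $E$ has uncountable dimension, I would try to find a Banach disc of infinite dimension inside $E$ in order to run the Klee--Mazur construction there.
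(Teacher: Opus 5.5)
For Banach $E$ your argument is exactly what the paper means by ``obvious modifications'' of the proof of Theorem~\ref{main}. You take a linearly independent arc $K$ in $E$ itself, the sets $W(y)$ for $y\in E$, Hahn--Banach in Claim~1, and Banach--Steinhaus on $E$ to bound the weak$^\ast$ convergent sequence $(\varphi(g_n))$. All of this is correct.

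The statement, however, covers arbitrary normed $E$. For incomplete $E$ you offer only plans, and they do not close the gap. Both ingredients genuinely fail there:
\begin{itemize}
\item weak$^\ast$ convergent sequences need not be bounded ($n e_n^\ast\to 0$ weak$^\ast$ on $(c_{00},\|\cdot\|_\infty)$);
\item $K\subseteq E$ forces $\dim E\ge\mathfrak c$.
\end{itemize}
Your remedies fall short as follows.
\begin{itemize}
\item The Banach-disc fix is sound when it applies: run Klee--Mazur in the Banach space $\spa B$ and use Banach--Steinhaus there. But an infinite-dimensional Banach disc spans an infinite-dimensional Banach space, so it exists only if $\dim E\ge\mathfrak c$. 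You give no reason it exists even then.
\item When $\dim E<\mathfrak c$, no weaker substitute for $K$ is available either. A perfect compact set meeting every finite-dimensional subspace in a finite set spans a space of dimension $\mathfrak c$, so the finite-fibre mechanism of Claim~1 is lost.
\item Working inside one finite-dimensional subspace cures boundedness, since weak$^\ast$ convergence is uniform on bounded subsets there. But then the finiteness in Claim~1 is lost, because the sets $\{y:(F,y)\in Z_{k,m}\}$ become absolutely convex subsets of that subspace.
\item Your countable-dimensional idea only shows that $(E^\ast,w^\ast)$ is first countable. Hence $C_p(X)$ is first countable at a point, i.e.\ $X$ is countable. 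Both spaces are then metrizable, and nothing contradictory has been produced.
\end{itemize}

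In fairness, the paper's one-line justification has the same limitation. In Theorem~\ref{main} completeness comes for free: $K$ lives in the Banach space $E^\ast$, and weakly convergent sequences in any normed space are norm-bounded. In the weak$^\ast$ setting both facts need $E$ complete. So your proposal is exactly as complete as the paper's argument: it handles Banach $E$, and more generally $E$ containing an infinite-dimensional Banach disc. The remaining case needs a genuinely new idea, and it is not clear that the statement holds in that generality.
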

In particular, for an infinite space $X$ and a normed space $E$, the spaces $C_p(X)$ ($C_p^\ast(X)$) and $(E,w^\ast)$ are not homeomorphic.
However, it is worth pointing out that in this case, known results imply that there are topological properties distinguishing the pointwise topology from the weak* topology. The following proposition is a direct consequence of the Banach-Alaoglu Theorem and \cite[S.186]{Tk}
\begin{prop}\label{B_for_weak}
If $X$ is an infinite space and $E$ is a normed space, then $(E^\ast,w^\ast)$ is $\sigma$-compact while $C_p(X)$ is not.
\end{prop}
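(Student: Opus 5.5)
The statement to prove is Proposition \ref{B_for_weak}. It has two parts, and I would prove them separately.

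\emph{$\sigma$-compactness of $(E^\ast,w^\ast)$.} Write
$$E^\ast=\bigcup_{n\geq 1} nB_{E^\ast},$$
where $B_{E^\ast}$ is the closed unit ball of $E^\ast$. By the Banach--Alaoglu Theorem, $B_{E^\ast}$ is weak* compact. The theorem only needs $E$ to be normed, not complete, since $E^\ast$ is the dual of the completion of $E$ and the weak* topologies coincide on bounded sets. Multiplication by $n$ is a weak* homeomorphism, so each $nB_{E^\ast}$ is weak* compact. Hence $(E^\ast,w^\ast)$ is a countable union of compact sets, i.e.\ $\sigma$-compact.

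\emph{$C_p(X)$ is not $\sigma$-compact.} This is exactly the content of \cite[S.186]{Tk}: $C_p(X)$ is $\sigma$-compact only when $X$ is finite. For completeness, I would sketch the argument as follows. Suppose $C_p(X)=\bigcup_n K_n$ with each $K_n$ compact. Each $K_n$ is pointwise bounded, so $\sup_{f\in K_n}|f(x)|<\infty$ for every $x$. Since $X$ is infinite and Tychonoff, there is a sequence of distinct points $x_n$ with pairwise disjoint open neighbourhoods $U_n$ forming a discrete-enough family; more precisely, one uses the standard fact that an infinite Tychonoff space contains a sequence of points with disjoint cozero neighbourhoods $U_n$. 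Given the $K_n$, a diagonal argument would then produce $f\in C_p(X)$ with $|f(x_n)|>\sup_{g\in K_n}|g(x_n)|$ for all $n$, so $f\notin\bigcup_n K_n$.

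The main obstacle is making this $f$ continuous. An infinite sum of bumps on the $U_n$ need not be continuous unless the family $\{U_n\}$ is locally finite, which may fail (for instance, in a pseudocompact space such as $\beta\omega$).

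I would sidestep this with a Baire category argument instead. Let $x_1,x_2,\dots$ be distinct points and use the map $f\mapsto (f(x_i))_i$. Alternatively, and more cleanly, one can note directly that no $K_n$ has interior in $C_p(X)$: every nonempty basic open set depends on only finitely many coordinates, so it contains functions taking arbitrarily large values at some point outside that finite set, while $K_n$ is bounded there. If $C_p(X)$ were a Baire space, this would already give a contradiction. Since $C_p(X)$ need not be Baire, I would rely on the cited Tkachuk exercise for the general case.

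Combining the two parts proves the proposition. As a consequence, $C_p(X)$ and $(E^\ast,w^\ast)$ are not homeomorphic, because $\sigma$-compactness is a topological invariant.
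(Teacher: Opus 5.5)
Your proposal is correct and follows the paper's approach: the paper also derives the $\sigma$-compactness of $(E^\ast,w^\ast)$ from the Banach--Alaoglu Theorem (your decomposition $E^\ast=\bigcup_n nB_{E^\ast}$ is the implicit argument), and it takes the non-$\sigma$-compactness of $C_p(X)$ for infinite $X$ directly from \cite[S.186]{Tk}. Your sketch of a self-contained argument for the second part remains incomplete, as you say, but it is not needed, since the citation covers that step just as it does in the paper.
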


For the space $C_p^\ast(X)$ the situation is slightly more delicate. The following result is known and easily follows from \cite[S.396]{Tk}.

\begin{prop}
For any space $X$ the following conditions are equivalent.
\begin{enumerate}[(i)]
 \item $C_p^\ast(X)$ is $\sigma$-compact
 \item $X$ is discrete
\end{enumerate}
\end{prop}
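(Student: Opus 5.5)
We prove (ii)$\Rightarrow$(i) directly and (i)$\Rightarrow$(ii) by contraposition, finding a closed non-$\sigma$-compact subspace of $C_p^\ast(X)$ whenever $X$ is not discrete.

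\emph{(ii)$\Rightarrow$(i).} If $X$ is discrete, every bounded function on $X$ is continuous. Hence $C_p^\ast(X)$ is the set of bounded functions in $\mathbb{R}^X$ with the product topology. Writing
$$C_p^\ast(X)=\bigcup_{n\geq 1}[-n,n]^X,$$
each piece $[-n,n]^X$ is compact by Tychonoff's theorem, so $C_p^\ast(X)$ is $\sigma$-compact.

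\emph{(i)$\Rightarrow$(ii).} Suppose $X$ is not discrete, and fix a non-isolated point $x_0\in X$. We check that the closed set
$$A=\{f\in C_p^\ast(X): f(x_0)=0,\ |f|\leq 1\}$$
is not $\sigma$-compact. Since a closed subspace of a $\sigma$-compact space is $\sigma$-compact, this proves the implication.

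Suppose $A=\bigcup_n K_n$ with each $K_n$ compact in $C_p(X)$. We build a function in $A$ outside every $K_n$ by a Baire-type diagonal argument. The engine is the standard fact that a pointwise-compact subset $K\subseteq C_p(X)$ cannot contain, near $x_0$, all functions of a small sup-norm-ball type. Compactness of $K$ in $C_p(X)$ forces a form of equicontinuity-like behaviour on a neighbourhood base: given $\varepsilon$ and a basic open set $O$ of $A$, one should find a smaller basic open set $O'\subseteq O$ (in $A$) and a neighbourhood $U$ of $x_0$ such that functions in $K\cap \overline{O'}$ oscillate by less than $\varepsilon$ on $U$. Otherwise, using that $x_0$ is non-isolated, one could construct a net in $K$ converging pointwise to a function discontinuous at $x_0$, contradicting that $K$ is closed in $\mathbb{R}^X$ relative to limits, given that $K\subseteq C_p(X)$ is compact.

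Then we diagonalize. Inductively choose finite sets $F_n\subseteq X$, points $x_n\to x_0$ with $x_n\neq x_0$, and values forcing $f(x_n)=1/2$, so that no member of $K_n$ matches the constraints imposed at stage $n$. Nested basic neighbourhoods $O(F_n,\cdot)$ pin down values on growing finite sets. In the limit we obtain a point of $\mathbb{R}^X$ lying in the closure of $A$ in the product. The hard part is ensuring that this limit is continuous, hence lies in $A$, while avoiding every $K_n$: the pinned values alternate between $0$ (at $x_0$) and $1/2$ near $x_0$, which is what excludes the limit from the $K_n$. This is the main obstacle, and it is handled as in \cite[S.396]{Tk}, where the needed statement is proved.

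If a self-contained route is wanted, I would instead use the cleaner Baire argument. The set $A$, or a suitable closed subset of it, is \emph{Čech-complete-like} only on discrete $X$, and one shows that $A$ with the subspace topology is not $\sigma$-compact by mapping it onto a non-$\sigma$-compact space. Restriction to a sequence $x_n\to x_0$ (or to a net of points accumulating at $x_0$, with $x_0$ included) gives a continuous image containing a copy of a non-$\sigma$-compact space such as $\{g\in[-1,1]^{\omega+1}: g(\omega)=\lim g(n)\}$ in the product topology. That space is not $\sigma$-compact by a Baire category argument on $[-1,1]^\omega$ applied to the set of convergent sequences.
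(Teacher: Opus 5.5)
Your (ii)$\Rightarrow$(i) direction is correct, and so is the reduction to the closed set $A$, but (i)$\Rightarrow$(ii) is not proved. Shrinking pointwise neighbourhoods $O(F_n,\cdot)$ only fixes values on the countable set $\bigcup_n F_n$. The point you obtain therefore lies only in the closure of $A$ in $\mathbb{R}^X$, and there is no reason for it to be continuous. You call this ``the main obstacle'' and hand it to \cite[S.396]{Tk}, which is also all the paper does (it gives no argument beyond that reference), so this part of your proposal is a citation, not a proof. The ``equicontinuity-like'' engine does not close the gap. Your sketch does not justify it (pointwise compact subsets of $C_p(X)$ are in general far from equicontinuous), and in any case it concerns the sets $K_n$, not the function being built. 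Your self-contained route fails for general $X$, because a non-isolated point of a Tychonoff space need not be the limit of any nontrivial sequence: take $X=\omega\cup\{p\}\subseteq\beta\omega$ with $p$ a free ultrafilter, or $X=\beta\omega$. For nets you do not identify any non-$\sigma$-compact image. Even in the sequence case the write-up needs repair. What matters is that the image itself is non-$\sigma$-compact; a mere copy inside it proves nothing. Here the image of $A$ is the set of null sequences in $[-1,1]^\omega$, which is meager in $[-1,1]^\omega$, so a Baire argument in $[-1,1]^\omega$ yields nothing. One must work inside the compact sets $\{g: g\upharpoonright N=s,\ |g_j|\le\varepsilon \mbox{ for } j\ge N\}$.

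The missing idea is to build the witness as a \emph{uniform} limit, so that continuity is automatic. The key observation is this: if $K\subseteq C_p(X)$ is compact, $f\in A$, $F\subseteq X$ is finite and $0<\varepsilon\le 1$, then $S=\{h\in A:\|h-f\|_\infty\le\varepsilon,\ h\upharpoonright F=f\upharpoonright F\}\not\subseteq K$. Indeed, $K$ is closed in $\mathbb{R}^X$. On the other hand, interpolate on finite sets by continuous $u:X\to[-\varepsilon,\varepsilon]$ and compose $f+u$ with the retraction $\mathbb{R}\to[-1,1]$. This shows that the closure of $S$ in $\mathbb{R}^X$ contains every $h:X\to[-1,1]$ with $|h-f|\le\varepsilon$ and $h=f$ on $F\cup\{x_0\}$. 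In particular it contains such an $h$ with $|h-f|=\varepsilon/2$ off $F\cup\{x_0\}$. That $h$ is discontinuous at $x_0$, because $f$ is continuous with $f(x_0)=0$ and $x_0$ is non-isolated, so $h\notin K$. Now suppose $A=\bigcup_n K_n$ with each $K_n$ compact, and put $f_0=\underline{0}$, $F_{-1}=\emptyset$. By the observation, choose $f_{n+1}\in A\setminus K_n$ with $\|f_{n+1}-f_n\|_\infty\le 2^{-n-1}$ and $f_{n+1}\upharpoonright F_{n-1}=f_n\upharpoonright F_{n-1}$. Since $K_n$ is closed in $\mathbb{R}^X$, choose a finite $F_n\supseteq F_{n-1}$ such that no $g\in K_n$ agrees with $f_{n+1}$ on $F_n$. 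The uniform limit $f$ of $(f_n)$ is continuous, lies in $A$, and coincides with $f_{n+1}$ on $F_n$ for every $n$. Hence $f\notin\bigcup_n K_n$, a contradiction. This works for every non-discrete $X$ and uses no sequences.
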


From the above proposition and the Banach-Alaoglu Theorem we obtain:

\begin{cor}
If $X$ is an infinite non-discrete space and $E$ is a normed space, then $C_p^\ast(X)$ and $(E^\ast,w^\ast)$ are not homeomorphic, as one of the spaces is $\sigma$-compact and the other is not.
\end{cor}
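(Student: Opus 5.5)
The plan is to exhibit a topological invariant, $\sigma$-compactness, that one of the two spaces has and the other lacks. Since the continuous image of a compact set is compact, a homeomorphic image of a $\sigma$-compact space is $\sigma$-compact. It therefore suffices to check two things separately: that $(E^\ast,w^\ast)$ is $\sigma$-compact for every normed space $E$, and that $C_p^\ast(X)$ is not $\sigma$-compact when $X$ is non-discrete.

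For the first part I would write
$$E^\ast=\bigcup_{n\geq 1} nB_{E^\ast},$$
where $B_{E^\ast}$ is the closed unit ball of the dual. By the Banach--Alaoglu Theorem, $B_{E^\ast}$ is compact in the weak$^\ast$ topology. Each $nB_{E^\ast}$ is the image of $B_{E^\ast}$ under multiplication by $n$, which is a homeomorphism of $(E^\ast,w^\ast)$, so each $nB_{E^\ast}$ is weak$^\ast$ compact. Hence $(E^\ast,w^\ast)$ is a countable union of compact sets. This works for every normed space $E$, including the trivial case $E=\{0\}$, where $E^\ast=\{0\}$.

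For the second part I would invoke the preceding Proposition: $C_p^\ast(X)$ is $\sigma$-compact if and only if $X$ is discrete. Since $X$ is assumed non-discrete, $C_p^\ast(X)$ is not $\sigma$-compact. Infiniteness of $X$ is not actually used here; non-discreteness alone suffices, and any non-discrete Tychonoff space is automatically infinite.

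Combining the two parts, a homeomorphism between $C_p^\ast(X)$ and $(E^\ast,w^\ast)$ would transfer $\sigma$-compactness from $(E^\ast,w^\ast)$ to $C_p^\ast(X)$, which is impossible. There is no real obstacle in this argument. The only substantive input is the cited Proposition, whose proof is outsourced to \cite[S.396]{Tk}, and within the argument itself the only point needing care is that scaling preserves weak$^\ast$ compactness.
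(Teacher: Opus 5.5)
Your proposal is correct and follows the paper's argument: Banach--Alaoglu (with the scaling $E^\ast=\bigcup_n nB_{E^\ast}$) makes $(E^\ast,w^\ast)$ $\sigma$-compact, while the preceding proposition shows that $C_p^\ast(X)$ is not $\sigma$-compact for non-discrete $X$. You are also right that the infiniteness hypothesis is redundant, since every finite Tychonoff space is discrete.
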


For a discrete space $X$, both $C_p^\ast(X)$ and $(E^\ast,w^\ast)$ are $\sigma$-compact. However, in this case, one can use another topological property (the so-called property $(B)$) to distinguish between these spaces. Recall that a space $X$ has the \textit{property (B)} if there is a countable family $\{A_n:n\in \omega\}$ of closed nowhere dense subsets of $X$ such that for any compact subset $K$ of $X$, there is $n\in \omega$ with $K\subseteq A_n$ (see \cite[Definition 5.1]{KM}). The following fact was noted in \cite[Proposition 5.2]{KM}:
\begin{prop}
If $E$ is an infinite-dimensional normed space, then $(E^\ast,w^\ast)$ has the property (B).
\end{prop}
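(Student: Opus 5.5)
To show that $(E^\ast,w^\ast)$ has property (B), I will take the closed balls as the covering sets:
$$A_n=\{y^\ast\in E^\ast:\|y^\ast\|\leq n\},\qquad n\geq 1 .$$
There are three things to check: each $A_n$ is weak* closed, each $A_n$ is weak* nowhere dense, and every weak* compact set lies in some $A_n$.

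Closedness is immediate from the Banach--Alaoglu Theorem, since $A_n$ is weak* compact and hence weak* closed in the Hausdorff space $(E^\ast,w^\ast)$. Alternatively, $A_n$ is an intersection of the weak* closed sets $\{y^\ast:|y^\ast(x)|\leq n\}$ over $\|x\|\leq 1$.

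For absorption of compacta, let $L\subseteq (E^\ast,w^\ast)$ be compact. For each $x\in E$, the map $y^\ast\mapsto y^\ast(x)$ is weak* continuous, so $\sup_{y^\ast\in L}|y^\ast(x)|<\infty$. Thus $L$ is pointwise bounded on $E$. Here there is a subtlety: $E$ is only assumed normed, so the Uniform Boundedness Principle cannot be applied to $E$ directly. I will pass to the completion $\widehat E$, whose dual is $E^\ast$ isometrically. Pointwise boundedness on a dense subspace does not by itself give norm boundedness, so I would argue as follows. The set $L$ is weak* compact, so $\bigcup_n A_n\cap L=L$ with each $A_n\cap L$ closed in $L$. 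By Baire's theorem in the compact space $L$, some $A_n\cap L$ has nonempty interior in $L$; iterating this on the closed complementary parts does not immediately close the argument. The cleaner route is to use that a weak* compact set is weak* bounded, hence bounded on $E$ pointwise, and then apply the standard fact that for normed $E$ a weak* bounded subset of $E^\ast$ which is weak* compact is norm bounded. This follows because the absolutely convex closed hull of $L$ is a weak* compact disk (for compact $L$ in the dual of a normed space, via Krein's theorem in $(E^\ast,w^\ast)$, or via the completion argument). I expect this step to be the main obstacle; if needed, I will cite the known fact that weak* compact subsets of duals of normed spaces are norm bounded. This is precisely what \cite[Proposition 5.2]{KM} uses.

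For nowhere density, note that $A_n=nA_1$ and $A_1$ is weak* closed, so it suffices to show that $A_1$ has empty weak* interior. A nonempty basic weak* open set has the form $\{y^\ast:|y^\ast(x_i)-z^\ast(x_i)|<\varepsilon,\ i\leq k\}$ for some $z^\ast\in E^\ast$, finitely many $x_1,\dots,x_k\in E$, and $\varepsilon>0$. This set contains $z^\ast+\bigcap_{i\leq k}\ker(\hat x_i)$, where $\hat x_i$ is evaluation at $x_i$. Since $E$ is infinite-dimensional, $E^\ast$ is infinite-dimensional, so this finite-codimensional subspace is nonzero. It therefore contains a vector $u^\ast$ of norm exceeding $2\|z^\ast\|+2$, and then $z^\ast+u^\ast$ lies in the basic open set but not in $A_1$.

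Finally, $E^\ast=\bigcup_n A_n$, and every compact set is contained in a single $A_n$, so the family $\{A_n:n\geq 1\}$ witnesses property (B).
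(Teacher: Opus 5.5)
Your closedness and nowhere-density arguments are fine. The absorption step, however, has a genuine gap, and it cannot be closed. The ``known fact'' you plan to cite, that weak$^\ast$ compact subsets of the dual of a normed space are norm bounded, is false for incomplete $E$.

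Take $E=c_{00}$ with the sup norm, so $E^\ast=\ell^1$ and $w^\ast=\sigma(\ell^1,c_{00})$ is the coordinatewise topology. Let $e_n^\ast$ be the $n$-th coordinate functional. For every $x\in c_{00}$ we have $(ne_n^\ast)(x)=nx_n=0$ for all large $n$. Hence $ne_n^\ast\to 0$ weak$^\ast$, and $L=\{0\}\cup\{ne_n^\ast:n\geq 1\}$ is weak$^\ast$ compact, although $\|ne_n^\ast\|=n$.

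Both of your justifications fail on this $L$:
\begin{enumerate}[(a)]
\item The weak$^\ast$-closed absolutely convex hull of $L$ is $\{y\in\ell^1:\sum_n|y_n|/n\leq 1\}$. This is a proper dense subset of the compact set $\{y\in\mathbb{R}^\omega:\sum_n|y_n|/n\leq 1\}$ (it misses $y_n=6/(\pi^2 n)$), hence not compact. So the Krein-type step is unavailable in $(E^\ast,w^\ast)$.
\item Passing to $\widehat E$ gives nothing, since $\sigma(E^\ast,E)$ is strictly coarser than $\sigma(E^\ast,\widehat E)$. Weak$^\ast$ compactness relative to $E$ yields no bound at points of $\widehat E\setminus E$.
\end{enumerate}

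In fact no choice of the sets $A_n$ works, so the statement is false as literally written for normed spaces. Suppose $E$ has a countable Hamel basis $(b_k)$. Then $w^\ast$ is generated by the countably many seminorms $y^\ast\mapsto|y^\ast(b_k)|$, hence is metrizable. A nonempty metrizable space never has (B): given closed nowhere dense sets $A_n$ and a decreasing base $(U_n)$ at a point $z$, choose $z_n\in U_n\setminus A_n$. Then $\{z\}\cup\{z_n:n\in\omega\}$ is compact and lies in no $A_n$.

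For $E=(c_{00},\|\cdot\|_1)$ one even has $(E^\ast,w^\ast)=C_p^\ast(\omega)$ on the nose, which lacks (B) by Proposition~\ref{propertyB}. So the later corollary for infinite discrete $X$, and the $C_p^\ast$ part of Theorem~\ref{main_weak^*}, also need $E$ to be complete.

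With $E$ a Banach space your proof is correct and is the standard one. A weak$^\ast$ compact set is pointwise bounded on $E$, hence norm bounded by the Uniform Boundedness Principle, so it lies in some $nB_{E^\ast}$. The detour through Baire category, Krein's theorem and the completion should simply be dropped.
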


In \cite{KKM}, Kucharski and the present authors characterized those spaces $X$ for which $C_p(X)$ has the property $(B)$. To state this result, we first need to recall a few definitions.

Given a family $\mathcal{A}$ of subsets of a space $X$, we say that $\mathcal{A}$ is \textit{point-finite} if for each $x\in X$ the collection $\{A\in \mathcal{A}:x\in A\}$ is finite. The family $\mathcal{A}$ is \textit{strongly point-finite} if there is a point-finite family $\{U_A:A\in \mathcal{A}\}$ of open subsets of $X$ satisfying $A\subseteq U_A$ for every $A\in \mathcal{A}$.
Following \cite{Sa}, we say that a space $X$ has \textit{the property $(\kappa)$} if every  sequence of pairwise disjoint finite subsets of $X$ has an infinite strongly point-finite subsequence. A space $X$ is \textit{$\kappa$-Fr\'echet-Urysohn} if for every open subset $U$ of $X$ and every $x\in \overline{U}$, there is a sequence $\{x_n:n\in \omega\}\subseteq U$ that converges to $x$. Combining the results established in \cite{Sa} and \cite{KKM}, we have the following:

\begin{thrm}\label{theoremKKM}
For any space $X$ the following conditions are equivalent:
\begin{enumerate}[(i)]
 \item The space $X$ satisfies the property $\kappa$.
 \item The space $C_p(X)$ is $\kappa$-Fr\'echet-Urysohn.
 \item The space $C_p(X)$ does not have the property $(B)$.
\end{enumerate}
\end{thrm}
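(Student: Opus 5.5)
The plan is to prove the cycle (i)$\Rightarrow$(ii)$\Rightarrow$(iii)$\Rightarrow$(i). The step (i)$\Rightarrow$(ii), together with its converse, is Sakai's theorem from \cite{Sa}. I would only recall its core: if $U\subseteq C_p(X)$ is open and $g\in\overline{U}$, write the basic open sets inside $U$ as $O$-type neighbourhoods supported on finite sets. Then extract a sequence of basic open sets whose finite supports can be made pairwise disjoint by a $\Delta$-system argument, as in the proof of Theorem \ref{main}. Property $(\kappa)$ gives open sets $U_k\supseteq A_{n_k}$ forming a point-finite family. Gluing via Urysohn functions supported in the $U_k$ then produces functions $g_k\in U$ with $g_k\to g$ pointwise, exactly as in Lemma \ref{lemma2}.

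For (iii)$\Rightarrow$(i) I would argue by contraposition, which I expect to be quite direct. Suppose $(A_k)_{k\in\omega}$ is a sequence of pairwise disjoint finite subsets of $X$ with no infinite strongly point-finite subsequence. For $n\in\omega$ and $m\geq 1$ put
$$B_{n,m}=\{f\in C_p(X): \forall k\geq n\ \exists x\in A_k\ |f(x)|\leq m\}.$$
\emph{Closedness.} Each condition ``$\exists x\in A_k$ with $|f(x)|\le m$'' is a finite union of closed sets, so $B_{n,m}$ is an intersection of closed sets and hence closed.

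\emph{Nowhere density.} Take a basic open set determined by a finite $G\subseteq X$. Since the $A_k$ are disjoint, some $A_k$ with $k\geq n$ misses $G$. Prescribe the values on $G$ as required and values $>m$ on $A_k$ to get an element of the open set outside $B_{n,m}$.

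\emph{Every compact set lies in some $B_{n,m}$.} Suppose a compact $L\subseteq C_p(X)$ lies in no $B_{n,m}$. Then for every $j$ there are $f_j\in L$ and $k_j\geq j$ with $|f_j|>j$ on all of $A_{k_j}$. Passing to a subsequence we may take the $k_j$ strictly increasing. Put $U_j=\{x:|f_j(x)|>j\}$, an open set containing $A_{k_j}$. For each $x\in X$ the evaluation $L\ni f\mapsto f(x)$ has compact, hence bounded, image. So $x\in U_j$ only for finitely many $j$, and $(A_{k_j})$ is strongly point-finite, a contradiction. Thus $\{B_{n,m}\}$ witnesses property $(B)$.

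The main obstacle is (ii)$\Rightarrow$(iii), i.e.\ showing that a $\kappa$-Fr\'echet-Urysohn $C_p(X)$ cannot have $(B)$; here I would follow \cite{KKM}. Suppose $\{B_n\}$ witnesses $(B)$. Each $V_n=C_p(X)\setminus B_n$ is open dense, and we want a single sequence $a_j\to\underline{0}$ that meets every $V_n$. Then $\{\underline{0}\}\cup\{a_j\}$ is compact and lies in no $B_n$. A first attempt is to apply $\kappa$-Fr\'echet-Urysohnness to one cleverly chosen open set $G$ with $\underline{0}\in\overline{G}$, built so that any sequence in $G$ converging to $\underline{0}$ must visit all the $V_n$. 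For instance, one could take $G=\bigcup_n (V_n\cap W_n)$, where the $W_n$ are open sets forced to be ``far apart'' by coordinates on disjoint finite subsets of $X$. If that fails, I would instead run an inductive diagonal construction. Using the vector-space structure of $C_p(X)$, translate sequences converging into $V_n$ and add small multiples, keeping finitely many coordinates controlled at each stage so that the limit stays $\underline{0}$. This step is where the real difficulty lies.
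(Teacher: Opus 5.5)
Your cycle (i)$\Rightarrow$(ii)$\Rightarrow$(iii)$\Rightarrow$(i) matches how the paper obtains the theorem. The paper gives no argument of its own; it simply combines Sakai's equivalence (i)$\Leftrightarrow$(ii) with the results of \cite{KKM}. Your contrapositive proof of (iii)$\Rightarrow$(i) is complete and correct. The sets $B_{n,m}$ are closed and nowhere dense for the reasons you give, and pointwise boundedness of a compact $L$ is exactly what makes $\{U_j\}$ point-finite.

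The one step you do not carry out is (ii)$\Rightarrow$(iii). You defer to \cite{KKM}, offer only a tentative plan with a fallback, and call it the real difficulty. It is in fact the easiest step, and your first attempt works after two adjustments.

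\emph{First adjustment.} Replace $B_n$ by $B_0\cup\dots\cup B_n$. These are still closed nowhere dense and still absorb every compact set, and now the $V_n$ decrease. Without this, a sequence in $G$ converging to $\underline{0}$ is only guaranteed to meet infinitely many of the sets $V_n$. So your requirement that it visit all of them is not what the construction delivers.

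\emph{Second adjustment.} Make the $W_n$ explicit. For infinite $X$, fix distinct points $x_0,x_1,\ldots\in X$ and put $W_n=\{f:|f(x_n)|>1\}$ and $G=\bigcup_n(V_n\cap W_n)$. For finite $X$, $C_p(X)\cong\mathbb{R}^{|X|}$ is locally compact, so (B) fails trivially.

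With these choices the argument closes quickly.
\begin{enumerate}[(a)]
 \item $\underline{0}\in\overline{G}$. Take a basic neighbourhood of $\underline{0}$ determined by a finite $F\subseteq X$, and any $n$ with $x_n\notin F$. The neighbourhood meets $W_n$ in a nonempty open set, and that set meets the dense open set $V_n$.
 \item For each $N$, the set $\{f:\ |f(x_i)|<1,\ i\le N\}$ is a neighbourhood of $\underline{0}$ disjoint from $W_0\cup\dots\cup W_N$.
 \item Let $a_j\in G$ with $a_j\to\underline{0}$, as (ii) provides. By (b), for every $N$ some $a_j$ lies in $V_n\cap W_n$ with $n>N$, hence in $V_N$.
 \item Therefore the compact set $\{\underline{0}\}\cup\{a_j:j\in\omega\}$ lies in no $B_N$. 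No diagonal construction is needed.
\end{enumerate}

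A smaller point concerns your recollection of Sakai's (i)$\Rightarrow$(ii), which is inaccurate. The supports of the basic sets have no uniform size bound, so the countable $\Delta$-system lemma is unavailable. Also, nothing in your sketch controls the prescribed values on a root. The usual argument proceeds as follows. Pick $h_n\in U$ within $1/n$ of $g$ on $A_{n-1}$, together with a basic neighbourhood of $h_n$ inside $U$ supported on a finite $A_n\supseteq A_{n-1}$. Then apply $(\kappa)$ to the disjoint differences $A_n\setminus A_{n-1}$ and glue. Since you cite \cite{Sa} for this step, this does not affect the validity of the proposal.
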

It turns out that a similar result holds for $C_p^\ast(X)$-spaces:

\begin{prop}\label{propertyB}
For any space $X$ the following conditions are equivalent:
\begin{enumerate}[(i)]
 \item The space $X$ satisfies the property $\kappa$.
 \item The space $C_p^\ast(X)$ is $\kappa$-Fr\'echet-Urysohn.
 \item The space $C_p^\ast(X)$ does not have the property $(B)$.
\end{enumerate}
\end{prop}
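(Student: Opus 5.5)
I will prove the cycle (i)$\Rightarrow$(ii)$\Rightarrow$(iii)$\Rightarrow$(i), adapting the arguments behind Theorem \ref{theoremKKM} (from \cite{Sa} and \cite{KKM}) to bounded functions. The point is that every function used in those arguments can be taken bounded, typically with values in $[0,1]$ or $[-1,1]$.

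For (i)$\Rightarrow$(ii), assume $X$ has property $(\kappa)$. Let $U\subseteq C_p^\ast(X)$ be open and $g\in\overline{U}$. By translating, I may take $g=\underline{0}$. I then repeat Sakai's construction:
\begin{enumerate}[(a)]
\item Choose basic open sets $\{f: |f(x)-f_n(x)|<\varepsilon_n,\ x\in A_n\}\subseteq U$ with $f_n\in U$ bounded, finite supports $A_n$, and $f_n$ close to $0$ on a growing finite set. The $A_n$ are arranged (after a $\Delta$-system/diagonal step) so that, off a common finite part, they are pairwise disjoint finite sets.
\item Use $(\kappa)$ to get a subsequence $A_{n_k}$ with a point-finite open expansion $U_k$.
\item Pick continuous $h_k:X\to[0,1]$ equal to $1$ on the new part of $A_{n_k}$ and vanishing outside $U_k$. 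Set $u_k=h_k f_{n_k}$, suitably corrected on the common part.
\end{enumerate}
Then $u_k$ is bounded because $f_{n_k}$ is, $u_k\in U$, and $u_k\to\underline{0}$ pointwise by point-finiteness. Hence $C_p^\ast(X)$ is $\kappa$-Fréchet–Urysohn. I would check Sakai's proof line by line to confirm that only multiplications and truncations of bounded functions occur.

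For (ii)$\Rightarrow$(iii), I follow the corresponding step of \cite{KKM}. Suppose $\{A_n\}$ witnesses property (B) for $C_p^\ast(X)$. Build inductively points $g_n$ and open sets $V_n$ with $\overline{V_{n+1}}\subseteq V_n\setminus A_n$, all accumulating at a point $g$. Then use the $\kappa$-Fréchet–Urysohn property on an open set $\bigcup_n W_n$ with $W_n\subseteq V_n\setminus A_n$ (or a similar scheme) to extract a sequence converging to $g$. The sequence together with its limit is a compact set not contained in any $A_n$, which is a contradiction. This step is purely topological and transfers verbatim, since $C_p^\ast(X)$ is a dense subspace and a topological group; I expect no difficulty here.

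For (iii)$\Rightarrow$(i), which I expect to be the main obstacle, assume $X$ fails $(\kappa)$. Fix a sequence $(B_n)$ of pairwise disjoint finite sets with no strongly point-finite subsequence. Following \cite{KKM}, I will define closed nowhere dense sets in $C_p^\ast(X)$, for instance
$$A_{n,m}=\{f\in C_p^\ast(X): \forall k\geq n\ \exists x\in B_k\ |f(x)|\leq m\}$$
or a variant with oscillation instead of values. Closedness and nowhere density should be immediate, since one can push $f$ beyond $m$ on a single $B_k$ using a bounded function. The danger is that bounded functions are cheap here: a countable union of such sets might not cover the compacta, or might cover everything. The key claim is that every compact $K\subseteq C_p^\ast(X)$ lies in some $A_{n,m}$. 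I would argue by contradiction. If it fails, pick $f_j\in K$ and $k_j\to\infty$ with $|f_j|>j$ on all of $B_{k_j}$. A cluster point $f\in K$ is continuous. If $f$ were bounded near the relevant points, then the sets $\{x: |f_j(x)|>j/2\}$ would yield a point-finite open expansion of a subsequence of the $B_{k_j}$. Making this rigorous requires a careful diagonal argument that uses only pointwise compactness, as in \cite{KKM}. To avoid this difficulty, I would first try replacing $|f(x)|\leq m$ by a condition on differences $|f(x)-f(x')|$ between points of $B_k$ and a fixed reference point. That formulation works equally well for bounded functions and is, I believe, essentially the \cite{KKM} construction.
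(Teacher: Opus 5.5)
\textbf{The gap is in (iii)$\Rightarrow$(i), the step you yourself flag as the main one.} You define
$A_{n,m}=\{f\in C_p^\ast(X):\forall k\geq n\ \exists x\in B_k\ |f(x)|\leq m\}$ and correctly check that these sets are closed and nowhere dense. But you never prove the key claim that every compact $K\subseteq C_p^\ast(X)$ lies in some $A_{n,m}$. Instead you route it through a cluster point of $(f_j)$ and its local boundedness, which gives no control over the $f_j$ themselves, and then fall back on an unspecified ``oscillation'' variant.

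The missing observation is that $K$ is pointwise bounded. Each evaluation $g\mapsto g(x)$ is continuous, so $M_x=\sup_{g\in K}|g(x)|<\infty$. Suppose $K\not\subseteq A_{n,m}$ for all $n,m$. Using $n=k_{j-1}+1$ and $m=j$, pick $f_j\in K$ and strictly increasing $k_j$ with $|f_j|>j$ on $B_{k_j}$. The open sets $U_j=\{x\in X:|f_j(x)|>j\}\supseteq B_{k_j}$ are point-finite, since $x\in U_j$ forces $j<M_x$. Hence $(B_{k_j})_j$ is strongly point-finite, contradicting the choice of $(B_n)$. (First discard the finitely many empty $B_n$; infinitely many empty ones would already form a strongly point-finite subsequence.) With this, your first family works as it stands.

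The paper avoids any construction here. Property (B) passes to dense subspaces: intersect the witnessing sets with the subspace. Since $C_p^\ast(X)$ is dense in $C_p(X)$, if $C_p^\ast(X)$ fails (B) then so does $C_p(X)$, and Theorem~\ref{theoremKKM} yields $(\kappa)$.

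\textbf{The other two implications are plans rather than proofs}, and the paper handles both more economically.

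For (ii)$\Rightarrow$(iii), the paper simply applies \cite[Theorem 3.2]{KKM} to $C_p^\ast(X)$. Your inductive sketch does not show that a sequence from $\bigcup_n W_n$ converging to $g$ must meet infinitely many $W_n$, so rely on that citation instead.

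For (i)$\Rightarrow$(ii), the paper does not redo Sakai's argument. It takes $f_n\in V$ converging to $f$ in $C_p(X)$ (Theorem~\ref{theoremKKM}) and replaces them by $\rho_n\circ f_n$. Here $\rho_n$ retracts $\mathbb{R}$ onto $[-a_n,a_n]$ with $a_n>M+1$ and $a_n>\max_{F_n}|f_n|$ (Lemma~\ref{lemma}).

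If you prefer your direct route, note that the $\Delta$-system lemma is unavailable for finite sets of unbounded size (think of $\{0,\dots,n\}$), so there is no ``common finite part''. Instead:
\begin{enumerate}[(a)]
\item choose $f_n\in U$ that is $\tfrac1n$-small on $F_0\cup\dots\cup F_{n-1}$;
\item apply $(\kappa)$ to the new parts $H_n=F_n\setminus\bigcup_{i<n}F_i$;
\item set $u_k=h_kf_{n_k}+(1-h_k)\,(r_k\circ f_{n_k})$, where $r_k$ is the retraction onto $[-\tfrac1{n_k},\tfrac1{n_k}]$.
\end{enumerate}
Then $u_k=f_{n_k}$ on $F_{n_k}$, so $u_k\in U$; $u_k$ is bounded; and $|u_k(x)|\leq\tfrac1{n_k}$ once $x\notin U_k$, so $u_k\to\underline{0}$ pointwise.
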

We need the following lemma.
\begin{lemma}\label{lemma}
Let $X$ be a space. If $C_p(X)$ is $\kappa$-Fr\'echet-Urysohn, then $C_p^\ast(X)$ is $\kappa$-Fr\'echet-Urysohn.
\end{lemma}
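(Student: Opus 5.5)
The plan is to use a bounded retraction-type map from $C_p(X)$ onto $C_p^\ast(X)$ that is continuous, fixes bounded functions, and carries open sets to sets with nonempty interior in a controlled way. The natural choice is the homeomorphism $h:C_p(X)\to C_p(X)$, $h(f)=\arctan\circ f$. Since $\arctan:\mathbb{R}\to(-\pi/2,\pi/2)$ is a homeomorphism, $h$ is a homeomorphism of $C_p(X)$ onto the subspace
\[
B=\{g\in C_p(X): g(X)\subseteq(-\pi/2,\pi/2)\}\subseteq C_p^\ast(X).
\]
Pointwise continuity of $h$ and $h^{-1}$ follows since both act coordinatewise by homeomorphisms of $\mathbb{R}$ and $(-\pi/2,\pi/2)$. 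Because $\kappa$-Fréchet--Urysohn is preserved by homeomorphisms, $B$ is $\kappa$-Fréchet--Urysohn.

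Next I transfer the property from $B$ to the whole of $C_p^\ast(X)$. Take $U\subseteq C_p^\ast(X)$ open and $g\in\cl(U)$. The map $T_{a,c}(u)=a u+c$ with $a>0$ and $c\in\mathbb{R}$ (constant) is a homeomorphism of $C_p^\ast(X)$. So we may translate so that $g$ is arbitrary bounded, and scale: choose $a>0$ so small that $\|a g\|_\infty<\pi/4$. Replacing $U$ by $aU$ and $g$ by $ag$, we may assume $\sup|g|<\pi/4$.

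Now set $U'=U\cap B$. This set is open in $B$, because $B$ is a subspace of $C_p^\ast(X)$. I claim $g\in\cl_B(U')$. Any basic neighborhood of $g$ in $C_p^\ast(X)$ determined by a finite $F\subseteq X$ and $\varepsilon<\pi/4$ meets $U$ at some $u$. The difficulty is that $u$ need not lie in $B$, since $u$ may be large off $F$.

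This is the main obstacle, and I would handle it with a truncation that preserves openness. Let $r:\mathbb{R}\to[-\pi/3,\pi/3]$ be the clamp and consider $r\circ u$. This function lies in $B$ and agrees with $u$ on $F$, because $|u|<\pi/4+\varepsilon<\pi/3$ there. However, $r\circ u$ need not be in $U$.

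To fix this, shrink $U$ first. Fix $u\in U$ and a basic neighborhood $N(u,G,\delta)\subseteq U$. Replace $u$ by $r\circ u$, choosing it only after enlarging to the finite set $F\cup G$. Since $g\in\cl(U)$, we may pick $u\in U\cap N(g,F,\varepsilon)$. Then $u\in N(u,G,\delta)\subseteq U$ for some finite $G$ and $\delta$. If needed, pass to $u'\in U\cap N(g,F\cup G,\varepsilon)$ with its own $G'$. The cleaner approach is as follows: $r\circ u$ agrees with $u$ on $F$, but on $G$ it may differ. So instead I use the map $\Phi(u)=r\circ u$ directly. $\Phi$ is continuous from $C_p^\ast(X)$ to $B$, and it is the identity on the open set $W=\{u:|u|<\pi/3\ \text{on}\ \ldots\}$. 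This is not global, so I localize.

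The simplest correct route, which I will follow, is this. Let $V=\{u\in C_p^\ast(X):\text{some finite }G\text{ with }N(u,G,\delta)\subseteq U\text{ and }|u|<\pi/3\text{ on }G\}$. Then $V\cap B\supseteq\Phi(\ldots)$. Concretely, for $u\in U$ near $g$ on a finite set $F$, with witness $N(u,G,\delta)\subseteq U$, I re-pick $u$ near $g$ on $F\cup G$ as well, iterating once. Since $|g|<\pi/4$, we get $|u|<\pi/3$ on $G$. Then $r\circ u\in N(u,G,\delta)\subseteq U$, and $r\circ u\in B\cap N(g,F,\varepsilon)$. The iteration terminates, because the new witness set only needs $u$ to be close to $g$ on the set where we test. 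We therefore choose $u\in U$ close to $g$ on $F$, then use openness of $U\cap N(g,F,\varepsilon)$ to obtain $G$ and $\delta$. Replacing the chosen point by its clamp works provided $|u|<\pi/3$ on $G$, and the set of such $u$ in the open neighborhood is nonempty by closure applied to the finite set $F\cup G$.

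Hence $g\in\cl_B(U')$. The $\kappa$-Fréchet--Urysohn property of $B$ then yields a sequence in $U'\subseteq U$ converging to $g$, and undoing the scaling gives the required sequence in the original $U$.
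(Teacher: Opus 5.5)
Your transfer of the property to $B=\{g:g(X)\subseteq(-\pi/2,\pi/2)\}\cong C_p(X)$ is fine. However, the step everything rests on is false in general: after normalizing to $\sup|g|<\pi/4$, you need $g\in\cl_B(U\cap B)$, and this can fail, so no version of your iteration can establish it.

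Take $X=\omega$ discrete, so that $C_p(X)=\mathbb{R}^\omega$ is metrizable and the hypothesis holds. Let $g=\underline{0}$ and $U=\{u\in C_p^\ast(\omega): u(n)>n \text{ for some } n\geq 1\}$. Then $U$ is open and $\underline{0}\in\cl(U)$: use $u=(n+1)\chi_{\{n\}}$ with $n\geq 1$, $n>\max F$. Now apply any normalization $u\mapsto au+c$ with $a>0$, $|c|<\pi/4$. A point $v\in(aU+c)\cap B$ satisfies $an<v(n)-c<3\pi/4$ for some $n\geq 1$. So with $F=\{0,\dots,N\}$, $N>3\pi/(4a)$, and $\varepsilon<a$, the neighbourhood $\{v:|v-c|<\varepsilon \text{ on } F\}$ misses $(aU+c)\cap B$.

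Your argument breaks exactly at the sentence ``Since $|g|<\pi/4$, we get $|u|<\pi/3$ on $G$''. Re-choosing $u'$ close to $g$ on $F\cup G$ controls $u'$ on $G$, but $u'$ need not lie in the old witness $N(u,G,\delta)$, and its own witness set $G'$ is again uncontrolled. The underlying problem is that every basic neighbourhood witnessing $u\in U$ may demand large values at some of its points, and then no function bounded by $\pi/2$, in particular no fixed-level clamp, lies in it.

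The missing idea is that the truncation level must depend on the function being truncated, and hence grow along the sequence. Insisting on a uniform bound, i.e. landing in $B$, is precisely what fails. The paper skips $B$ entirely:
\begin{enumerate}[(1)]
\item It writes $U=V\cap C_p^\ast(X)$ with $V$ open in $C_p(X)$, notes $f\in\cl_{C_p(X)}(V)$, and uses the $\kappa$-Fr\'echet--Urysohn property of $C_p(X)$ directly to get $f_n\in V$ with $f_n\to f$.
\item It fixes basic neighbourhoods $O(f_n,F_n,\varepsilon_n)\subseteq V$ and clamps $f_n$ to $[-a_n,a_n]$, where $a_n>M+1$, $a_n>\max_{F_n}|f_n|$, and $M=\sup|f|$.
\item Then $g_n=\rho_n\circ f_n$ is bounded and agrees with $f_n$ on $F_n$, so $g_n\in V\cap C_p^\ast(X)=U$.
\item For each $x$, eventually $|f_n(x)|<M+1$, so $g_n(x)=f_n(x)\to f(x)$.
\end{enumerate}
Your homeomorphism $h$ contributes nothing here. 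Pulling back through it just returns you to a sequence in $C_p(X)$, and this variable-level truncation is still required.
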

\begin{proof}
Fix an open subset $U$ of $C_p^\ast(X)$ and let $f\in \overline{U}$. In particular, $f$ is bounded, so there is $M>0$ such that $|f(x)|\leq M$ for all $x\in X$. Find an open subset $V$ of $C_p(X)$ such that
\begin{equation}\label{eq9}
U=V\cap C_p^\ast(X).
\end{equation}

Clearly, $f\in \overline{V}$ (this time the closure is taken in $C_p(X)$). Since $C_p(X)$ is $\kappa$-Fr\'echet-Urysohn there are $f_n\in V$, for $n\in \omega$, such that the sequence $(f_n)_{n\in \omega}$ converges pointwise to $f$. For each $n\in \omega$, there is a finite set $F_n\subseteq X$ and $\varepsilon_n>0$ such that the basic open neighborhood
$$O(f_n,F_n,\varepsilon_n)=\left\{g\in C_p(X): \forall x\in F_n \;\; |g(x)-f_n(x)|<\varepsilon_n \right\}$$
of $f_n$ in $C_p(X)$ satisfies
\begin{equation}\label{eq10}
 O(f_n,F_n,\varepsilon_n)\subseteq V.
\end{equation}

For $n\in \omega$, fix a real number $a_n$ such that
$$a_n>M+1\quad \mbox{and}\quad a_n>\max\{|f_n(x)|:x\in F_n\}.$$
Let $\rho_n:\mathbb{R}\to [-a_n,a_n]$ be a retraction. Define $g_n:X\to \mathbb{R}$ as
$$g_n=\rho_n \circ f_n.$$
Obviously,
$g_n\in C_p^\ast(X)$. Let us verify that $g_n\in O(f_n,F_n,\varepsilon_n)$. Indeed, pick $x\in F_n$. Since $a_n>|f_n(x)|$ and $\rho_n:\mathbb{R}\to [-a_n,a_n]$ is a retraction, we have $g_n(x)=\rho_n(f_n(x))=f_n(x)$. So $g_n$ agrees with $f_n$ on $F_n$, whence $g_n\in O(f_n,F_n,\varepsilon_n)$. Since $g_n\in C_p^\ast(X)$, we infer from \eqref{eq10} and \eqref{eq9} that $g_n\in U$.

It remains to show that the sequence $(g_n)_{n\in \omega}$ converges pointwise to $f$. To this end, fix $x\in X$. By the choice of the sequence $(f_n)_{n\in \omega}$, we have $f_n(x)\to f(x)\in [-M,M]$. Find $N$ such that $|f_n(x)|<M+1$ for all $n>N$. Since $a_n>M+1$ and $\rho_n:\mathbb{R}\to [-a_n,a_n]$ is a retraction, we have
$g_n(x)=\rho_n(f_n(x))=f_n(x)$ for all $n>N$. Since $f_n(x)\to f(x)$, we conclude that $g_n(x)\to f(x)$. This finishes the proof.
\end{proof}

\begin{proof}[Proof of Proposition \ref{propertyB}]
Assume that $X$ satisfies the property $(\kappa)$. By Theorem \ref{theoremKKM}, $C_p(X)$ is $\kappa$-Fr\'echet Urysohn. So according to Lemma \ref{lemma}, $C_p^\ast(X)$ is $\kappa$-Fr\'echet Urysohn. This shows $(i)\Rightarrow (ii)$. Now, if $C_p^\ast(X)$ is $\kappa$-Fr\'echet Urysohn, then by \cite[Theorem 3.2]{KKM}, $C_p^\ast(X)$ does not have the property $(B)$. Thus, $(ii)\Rightarrow (iii)$. Finally, suppose that $C_p^\ast(X)$ does not have the property $(B)$. Since $C_p^\ast(X)$ is a dense subspace of $C_p(X)$, we infer from \cite[Proposition 3.5]{Tk1}, that $C_p(X)$ does not have the property $(B)$. Hence, by Theorem \ref{theoremKKM}, $X$ satisfies the property $(\kappa)$. This proves $(iii)\Rightarrow (i)$.
\end{proof}

\begin{cor}
If $X$ is an infinite discrete space and $E$ is an infinite-dimensional normed space, then $C_p^\ast(X)$ and $(E^\ast,w^\ast)$ are not homeomorphic, as $(E^\ast,w^\ast)$ has the property $(B)$ while $C_p^\ast(X)$ does not have the property $(B)$.
\end{cor}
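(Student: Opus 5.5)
The corollary follows by combining two facts already established in the paper: $(E^\ast,w^\ast)$ has the property $(B)$, and $C_p^\ast(X)$ fails it when $X$ is discrete. Since the property $(B)$ is a topological invariant, this gives the claim.

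\textbf{Step 1 (the dual side).} Because $E$ is infinite-dimensional, the proposition quoted from \cite[Proposition 5.2]{KM} shows directly that $(E^\ast,w^\ast)$ has the property $(B)$.

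\textbf{Step 2 (the function space side).} By Proposition \ref{propertyB}, it suffices to check that a discrete space $X$ satisfies the property $(\kappa)$. Take any sequence $(A_n)_{n\in\omega}$ of pairwise disjoint finite subsets of $X$. Since $X$ is discrete, each $A_n$ is open, so we may put $U_{A_n}=A_n$. The family $\{A_n:n\in\omega\}$ is point-finite because its members are pairwise disjoint, and hence the whole sequence is strongly point-finite; no passage to a subsequence is even needed. Proposition \ref{propertyB}, implication $(i)\Rightarrow(iii)$, then tells us that $C_p^\ast(X)$ does not have the property $(B)$.

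\textbf{Step 3 (invariance).} Suppose $h:C_p^\ast(X)\to(E^\ast,w^\ast)$ were a homeomorphism, and let $\{A_n\}$ witness the property $(B)$ for $(E^\ast,w^\ast)$. Homeomorphisms preserve closedness, nowhere density and compactness, so $\{h^{-1}(A_n)\}$ would witness the property $(B)$ for $C_p^\ast(X)$. This contradicts Step 2.

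None of the steps is a real obstacle, since all of the depth sits in Proposition \ref{propertyB} and \cite{KM}. The only point needing care is confirming that the definition of the property $(\kappa)$ is met trivially in the discrete case, which Step 2 does.
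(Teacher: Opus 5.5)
\textbf{Overall.} Your Steps 2 and 3 are correct and match the paper's proof, which also just notes that discrete spaces have $(\kappa)$ and applies Proposition~\ref{propertyB}. In Step 1 you cite the right result, whereas the paper's proof points to Proposition~\ref{B_for_weak}, which only concerns $\sigma$-compactness.

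\textbf{The gap is in Step 1.} The paper's proof relies on the same quoted fact, so it has the same gap. That $(E^\ast,w^\ast)$ has the property $(B)$ is true for infinite-dimensional \emph{Banach} spaces, but false for arbitrary normed spaces. Take $E=c_{00}$ with the sup norm, so $E^\ast=\ell_1$. Since $E$ has the countable Hamel basis $(e_i)_{i\in\omega}$, the weak$^\ast$ topology is generated by the countably many seminorms $x^\ast\mapsto|x^\ast(e_i)|$. Hence it is the metrizable topology of coordinatewise convergence on $\ell_1$. No nonempty first-countable space $Y$ has $(B)$. Indeed, let $A_n\subseteq Y$ be closed nowhere dense and let $(U_n)_{n\in\omega}$ be a decreasing local base at some $y\in Y$. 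Choose $x_n\in U_n\setminus A_n$. Then $K=\{y\}\cup\{x_n:n\in\omega\}$ is compact, and $K\not\subseteq A_n$ for every $n$. So for $E=c_{00}$ the assertion in Step 1 fails.

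\textbf{What Step 1 needs.} It requires every weak$^\ast$-compact subset of $E^\ast$ to be norm-bounded. Then the balls $nB_{E^\ast}$ witness $(B)$: here $B_{E^\ast}$ is the closed unit ball, these balls are weak$^\ast$-compact by Alaoglu, and they have empty weak$^\ast$-interior because $\dim E=\infty$. This boundedness comes from the Banach--Steinhaus theorem, which needs $E$ to be complete. In $c_{00}^\ast$ the set $\{ne_n:n\in\omega\}\cup\{0\}$ is weak$^\ast$-compact but norm-unbounded.

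\textbf{Consequence.} Your argument, like the paper's, proves the corollary only for Banach spaces $E$. For incomplete $E$ the stated reason is false. For $E=c_{00}$ and $X=\omega$, neither property $(B)$ nor $\sigma$-compactness separates $C_p^\ast(X)$ from $(E^\ast,w^\ast)$, since both are $\sigma$-compact and metrizable. Any non-homeomorphism claim in that case would need a genuinely different argument.
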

\begin{proof}
According to Proposition \ref{B_for_weak}, the space $(E^\ast,w^\ast)$ has the property $(B)$.
It is readily seen that any discrete space satisfies the property $(\kappa)$. So by Proposition \ref{propertyB}, the space $C_p^\ast(X)$ does not have the property $(B)$.
\end{proof}

Finally, let us note that by making obvious modifications to the proof of Proposition \ref{condensation may exist}, we obtain the following result related to Theorem \ref{main_weak^*}:

\begin{prop}\label{condensation may exist_weak^*}
Let $X$ be a $\sigma$-compact non-discrete metrizable space and let $E$ be an  infinite-dimensional Banach space with separable dual $E^\ast$. Then there exists a condensation $\varphi:C_p(X)\to (E^\ast,w^\ast)$. Similarly, there exists a condensation $\psi:C_p^\ast(X)\to (E^\ast,w^\ast)$.
\end{prop}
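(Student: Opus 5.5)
We follow the proof of Proposition \ref{condensation may exist} verbatim through the first five steps, and change only the last two. Put $\varphi_1,\psi_1$ for the restriction maps to $C_D(X)$ and $C^\ast_D(X)$. Let $\varphi_2,\psi_2$ be the homeomorphisms onto $\sigma^\omega$ from \cite[Theorem 6.2]{DM}. Take $\varphi_3$ to be a homeomorphism $\sigma^\omega\to\sigma^\omega\times\mathbb{R}^\omega$. Set $\varphi_4=\zeta\times id_{\mathbb{R}^\omega}$, with $\zeta$ Pytkeev's condensation onto $[0,1]^\omega$. Let $\varphi_5$ be the Toruńczyk homeomorphism $[0,1]^\omega\times\mathbb{R}^\omega\to\mathbb{R}^\omega$. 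Composing these gives a condensation of $C_p(X)$ (resp.\ $C_p^\ast(X)$) onto $\mathbb{R}^\omega$. None of this used anything about $E$, so it suffices to produce a condensation of $\mathbb{R}^\omega$ onto $(E^\ast,w^\ast)$.

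For this we use the hypothesis that $E^\ast$ is separable. Since $E$ is infinite-dimensional, so is $E^\ast$. Hence $(E^\ast,\|\cdot\|)$ is a separable infinite-dimensional Banach space, and the Anderson--Kadec Theorem (\cite[Ch.~VI, Theorem 5.2]{BP}) gives a homeomorphism $\varphi_6:\mathbb{R}^\omega\to(E^\ast,\|\cdot\|)$. Then let $\varphi_7:(E^\ast,\|\cdot\|)\to(E^\ast,w^\ast)$ be the identity. It is a continuous bijection because the weak$^\ast$ topology is coarser than the norm topology; indeed, each evaluation $x^\ast\mapsto x^\ast(x)$, $x\in E$, is norm-continuous. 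Setting $\psi_i=\varphi_i$ for $i\ge 3$, the compositions $\varphi=\varphi_7\circ\dots\circ\varphi_1$ and $\psi=\psi_7\circ\dots\circ\psi_1$ are compositions of condensations, hence condensations onto $(E^\ast,w^\ast)$.

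The only point needing care, and the main (mild) obstacle, is the use of the hypotheses on $E$. Anderson--Kadec requires a separable Fréchet/Banach space, which is why separability is imposed on $E^\ast$ rather than on $E$: $E^\ast$ must be separable in norm. The hypothesis that $E$ is a Banach space is harmless, since $E^\ast$ is complete in any case. We should also check that $\dim E^\ast=\infty$; this follows from $\dim E=\infty$ via Hahn--Banach. Everything else is inherited from the proof of Proposition \ref{condensation may exist}.
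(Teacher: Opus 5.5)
Your proposal is correct and is exactly the ``obvious modification'' the paper intends. You keep the chain of condensations onto $\mathbb{R}^\omega$, apply Anderson--Kadec to the separable infinite-dimensional Banach space $(E^\ast,\|\cdot\|)$, and finish with the norm-to-weak$^\ast$ identity map (you also correctly take $C^\ast_D(X)$ as the target of $\psi_1$).
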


\section*{AI statement}

We have not used any artificial intelligence tools at any stage of this research or in the preparation of this paper.

\bibliographystyle{siam}
\bibliography{bib.bib}

\end{document}